\newcommand{\Xcomment}[1]{}
\newtheorem{theorem}{Theorem}[section]
\newtheorem{lemma}[theorem]{Lemma}
\newtheorem{corollary}[theorem]{Corollary}
\newtheorem{prop}[theorem]{Proposition}
\newcommand{\SEC}[1]{\ref{sec:#1}}  % reference to section
\newcommand{\SSEC}[1]{\ref{ssec:#1}}  % reference to subsection
\makeatletter \@addtoreset{equation}{section} \makeatother
\newenvironment{proof}{\noindent{\bf Proof}~}%
{\hfill$\qed$\medskip}
\def\qed{ \ \vrule width.1cm height.3cm depth0cm}
\newenvironment{numitem1}{\refstepcounter{equation}\begin{enumerate}%
\item[(\thesection.\arabic{equation})]}{\end{enumerate}}
\newcommand{\refeq}[1]{(\ref{eq:#1})}  % reference to equation
\renewcommand{\section}{\@startsection{section}{1}{0pt}%
{-3.5ex plus -1ex minus -.2ex}{2.3ex plus .2ex}%
{\normalfont\Large}}
\renewcommand{\subsection}{\@startsection{subsection}{2}{0pt}%
{-3.0ex plus -1ex minus -.2ex}{-1.5ex plus .2ex}%
{\normalfont\normalsize\bf}}
\renewcommand{\subsection}{\@startsection{subsection}{2}{0pt}%
{-3.0ex plus -1ex minus -.2ex}{1.5ex plus .2ex}%
{\normalfont\normalsize\bf}}
\def\Rset{{\mathbb R}}
\def\Zset{{\mathbb Z}}
\def\Ascr{{\cal A}}
\def\Bscr{{\cal B}}
\def\Cscr{{\cal C}}
\def\Dscr{{\cal D}}
\def\Iscr{{\cal I}}
\def\Mscr{{\cal M}}
\def\Nscr{{\cal N}}
\def\Pscr{{\cal P}}
\def\Rscr{{\cal R}}
\def\Sscr{{\cal S}}
\def\Tscr{{\cal T}}
\def\Wscr{{\cal W}}
\def\tilde{\widetilde}
\def\hat{\widehat}
\def\bar{\overline}
\def\eps{\epsilon}
\def\bfC{{\bf C}}
\def\bfS{{\bf S}}
\def\bfW{{\bf W}}
\def\Mscrw{{\Mscr^{\rm w}}}
\def\Mscre{{\Mscr^{\rm e}}}
\def\Zfr{Z^{\,\rm fr}}
\def\Zrear{Z^{\,\rm rear}}
\def\Zrim{Z^{\,\rm rim}}
\def\Cfr{C^{\,\rm fr}}
\def\Crear{C^{\,\rm rear}}
\def\Crim{C^{\,\rm rim}}
\def\Deltafr{\Delta^{\,\rm fr}}
\def\Deltarear{\Delta^{\,\rm rear}}
\def\epsfr{{\eps,\rm fr}}
\def\epsrear{{\eps,\rm rear}}
\def\Nscrup{\Nscr^{\uparrow}}
\def\Nscrdown{\Nscr^{\downarrow}}
\def\Qfrag{Q^\equiv}
\def\Cfrag{C^\equiv}
\def\Qfragen{{Q^\equiv_{\rm en}}}
\def\precen{\prec_{\rm en}}
\begin{document}

 \title{The weak separation in higher dimensions}

 \author{Vladimir I.~Danilov
\thanks{Central Institute of Economics and
Mathematics of the RAS, 47, Nakhimovskii Prospect, 117418 Moscow, Russia;
email: danilov@cemi.rssi.ru.}
 \and
Alexander V.~Karzanov
\thanks{Central Institute of Economics and Mathematics of
the RAS, 47, Nakhimovskii Prospect, 117418 Moscow, Russia; email:
akarzanov7@gmail.com. Corresponding author. }
  \and
Gleb A.~Koshevoy
\thanks{The Institute for Information Transmission Problems of
the RAS, 19, Bol'shoi Karetnyi per., 127051 Moscow, Russia; email:
koshevoyga@gmail.com. Supported in part by grant RSF 16-11-10075. }
  }

\date{}

 \maketitle

 \begin{quote}
 {\bf Abstract.} \small
For an odd integer $r>0$ and an integer $n>r$, we introduce a notion of
\emph{weakly $r$-separated} collections of subsets of $[n]=\{1,2,\ldots,n\}$.
When $r=1$, this corresponds to the concept of weak separation introduced by
Leclerc and Zelevinsky. In this paper, extending results due to
Leclerc-Zelevinsky, we develop a geometric approach to establish a number of
nice combinatorial properties of maximal weakly r-separated collections.

As a supplement, we also discuss an analogous concept when $r$ is even.

Posted on \textsl{arXiv}:1904.09798[math.CO].
 \smallskip

{\em Keywords}\,: weakly separated sets, cyclic zonotope, fine zonotopal
tiling, higher Bruhat order

\smallskip {\em MSC Subject Classification}\, 05E10, 05B45
  \end{quote}

%----------------------- Sec. 1

\section{Introduction}  \label{sec:intr}

Let $n$ be a positive integer and let $[n]$ denote the set $\{1,2,\ldots,n\}$.
For subsets $X,Y\subseteq[n]$, we write $X<Y$ if the maximal element $\max(X)$
of $X$ is smaller than the minimal element $\min(Y)$ of $Y$, letting
$\max(\emptyset):=0$ and $\min(\emptyset):=n+1$. An \emph{interval} in $[n]$
is a nonempty subset $\{a,a+1,\ldots,b\}$ in it, denoted as $[a,b]$ (so
$[n]=[1,n]$).

The well-known concept of \emph{strongly separated} sets introduced by Leclerc
and Zelevinsky~\cite{LZ} is extended as follows.
  \smallskip

\noindent\textbf{Definition.} ~For $r\in\Zset_{\ge 0}$, sets $A,B\subseteq [n]$
are called (strongly) $r$-\emph{separated} if there is no sequence
$i_1<i_2<\cdots<i_{r+2}$ of elements of $[n]$ such that the elements with odd
indices (namely, $i_1,i_3,\ldots$) belong to one of $A-B$ and $B-A$, while the
elements with even indices ($i_2,i_4,\ldots$) belong to the other of these two
sets (where $A'-B'$ denotes the set difference $\{i\colon A'\ni i\notin B'\}$).
Accordingly, a \emph{set-system} $\Sscr\subseteq 2^{[n]}$ (a collection of
subsets of $[n]$) is called $r$-separated if any two members of $\Sscr$ are
such.
  \smallskip

Equivalently, $A,B\subseteq[n]$ are $r$-separated if there are intervals
$I_1<I_2<\cdots <I_{r'}$ in $[n]$ with $0\le r'\le r+1$ such that one of $A-B$
and $B-A$ is included in $I_1\cup I_3\cup \ldots$\;, and the other in $I_2\cup
I_4\cup\ldots$\;. If, in addition, $r'+|I_1|+\cdots+ |I_{r'}|$ is as small as
possible, we say that $(I_1,\ldots,I_{r'})$ is the \emph{interval cortege}
associated with $A,B$.

In particular, $A,B$ are 0-separated if $A\subseteq B$ or $B\subseteq A$, and
1-separated if either $\max(A-B)<\min(B-A)$ or $\max(B-A)<\min(A-B)$. The
1-separation relation is just what is called the strong separation one
in~\cite{LZ}. The case $r=2$ was studied by Galashin~\cite{gal} (who used the
term ``chord separated'' for 2-separated sets). A study for a general $r$ is
conducted in Galashin and Postnikov~\cite{GP}.

When $A,B$ are $r$-separated but not $(r-1)$-separated, they are called
$(r+1)$-\emph{interlaced}. In other words, the interval cortege associated with
such $A,B$ consists of $r+1$ intervals. For example, $A=\{1,2,5,6,7,10\}$ and
$B=\{2,3,6,9\}$ have the interval cortege $(\{1\},\,\{3\},\,
[5,7],\,\{9\},\{10\})$), and therefore they are 5-interlaced.

Another sort of set separation introduced by Leclerc and Zelevinsky is known
under the name of \emph{weak separation} (which appeared in~\cite{LZ} in
connection with the problem of characterizing quasi-commuting flag minors of a
quantum matrix; for a discussion on this and wider relations between the weak
separation and quantum minors, see also~\cite[Sect.~8]{DK}). We generalize that
notion to ``higher dimensions'' in the following way (where the term ``higher
dimensions'' is justified by appealing to a geometric interpretation, defined
later). When $A,B\subseteq [n]$ are such that $\min(A-B)<\min(B-A)$ and
$\max(A-B)>\max(B-A)$, we say that $A$ \emph{surrounds} $B$.
  \smallskip

\noindent\textbf{Definition.} ~Let $r$ be a positive \emph{odd} integer. Sets
$A,B\subseteq[n]$ are called \emph{weakly $r$-separated} if they are
$r'$-interlaced with $r'\le r+2$, and if $r'=r+2$ takes place, then either (a)
~$A$ surrounds $B$ and $|A|\le |B|$, or (b) ~$B$ surrounds $A$ and $|B|\le|A|$.
Accordingly, a set-system $\Wscr\subseteq 2^{[n]}$ is called weakly
$r$-separated if any two members of $\Wscr$ are such.
 \smallskip

In other words, $A$ and $B$ are weakly $r$-separated if they are either
(strongly) $r$-separated or $(r+2)$-interlaced, and in the latter case, for the
interval cortege $(I_1,\ldots,I_{r+2})$ associated with $A,B$, if the
cardinatilies of $A$ and $B$ are different, say, $|A|<|B|$, then $I_1\cup
I_3\cup\ldots \cup I_{r+2}$ contains $A-B$ (and $I_2\cup I_4\cup\ldots \cup
I_{r+1}$ contains $B-A$). For example, $\{1,2,6\}$ and $\{2,3,4,5\}$ are weakly
1-separated, whereas $\{1,2,5,6,7\}$ and $\{1,3,4,5\}$ are 3-interlaced (having
the interval cortege $(\{2\},[3,4],[6,7])$) but not weakly 1-separated.

When $r=1$, the notion of weak 1-separation turns into the weak separation
of~\cite{LZ}.

In this paper we generalize, to an arbitrary odd $r\ge 1$, two results on
weakly separated collections obtained in~\cite{LZ}. One of those says that
  \begin{numitem1}\label{eq:max_size}
the maximal possible sizes (numbers of members) of strongly and weakly
separated collections in $2^{[n]}$ are the same and equal to $\frac12 n(n+1)+1$
($=\binom{n}{2}+\binom{n}{1}+\binom{n}{0}$).
 \end{numitem1}
To formulate a generalization of~\refeq{max_size}, let $r<n$ and denote the
maximal possible size $|\Sscr|$ of an $r$-separated collection $\Sscr\subseteq
2^{[n]}$ by $s_{n,r}$. Also when $r$ is odd, denote the maximal possible size
of a weakly $r$-separated collection $\Wscr\subseteq 2^{[n]}$ by $w_{n,r}$.
Extending results in~\cite{LZ} (for $r=1$) and~\cite{gal} (for $r=2$), Galashin
and Postnikov~\cite{GP} showed that
  \begin{numitem1} \label{eq:snr}
  ~$s_{n,r}=\binom{n}{\le r+1}$ \qquad $\left(=\binom{n}{r+1}+\binom{n}{r}+\cdots
  +\binom{n}{0}\right)$.
  \end{numitem1}

We prove the following
  \begin{theorem} \label{tm:w=s}
Let $r$ be odd. Then $w_{n,r}=s_{n,r}$.
 \end{theorem}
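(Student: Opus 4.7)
The easy direction $s_{n,r} \le w_{n,r}$ is immediate from the definitions: if $A,B$ are strongly $r$-separated, then they are $r'$-interlaced with $r' \le r+1$, so the surround/size clause triggered only when $r'=r+2$ is vacuously satisfied. Hence every strongly $r$-separated collection is already weakly $r$-separated.

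For the nontrivial inequality $w_{n,r} \le s_{n,r}$, my plan is a mutation argument: starting from an arbitrary weakly $r$-separated collection $\Wscr \subseteq 2^{[n]}$, I would modify $\Wscr$ by single-element replacements that preserve weak $r$-separation, preserve cardinality, and strictly decrease a monovariant measuring how far $\Wscr$ is from being strongly $r$-separated. When the procedure terminates, the resulting collection is strongly $r$-separated, and~\refeq{snr} then gives $|\Wscr| \le s_{n,r}$. Concretely, when $A,B \in \Wscr$ are $(r+2)$-interlaced with interval cortege $(I_1,\ldots,I_{r+2})$, and (say) $A$ surrounds $B$ with $|A| \le |B|$, I would replace $A$ by a canonical ``flipped'' variant such as $A^\ast = A \triangle I_1$ or $A^\ast = A \triangle I_{r+2}$, chosen so as to strictly reduce the interlacing number with $B$ while remaining weakly $r$-separated from every other $C \in \Wscr$. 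A natural monovariant would be the total interlacing excess $\sum_{\{C,D\}\subseteq\Wscr, \,(r+2)\text{-int.}} 1$, or a lexicographic refinement thereof.

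The natural setting guiding this construction is the cyclic zonotope $Z(n,r+1)$ and its fine zonotopal tilings, as invoked in~\refeq{snr}: strongly $r$-separated collections appear as vertex sets of fine zonotopal tilings (\cite{GP}), and all such tilings have exactly $\binom{n}{\le r+1}$ vertices. I would encode a weakly $r$-separated collection as the vertex set of a relaxed zonotopal subdivision admitting certain ``bent'' cells that correspond precisely to $(r+2)$-interlaced surround pairs satisfying the size condition, and the mutation above would be realized as a geometric flattening of such a bent cell, leaving the vertex count invariant.

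The main obstacle is establishing this geometric dictionary for weakly $r$-separated collections --- in particular, showing that the surround/size clause in the definition of weak $r$-separation matches exactly the class of bent cells that do not create extra vertices --- together with proving termination of the flip procedure, which requires a careful analysis of how the interval cortege of a pair behaves under single-element replacements. For $r=1$ this reduces essentially to the arguments of~\cite{LZ}; for odd $r \ge 3$ the combinatorics of flips lives in $(r+1)$-dimensional zonotopes and is substantially more intricate, so that is where I expect the core technical work to lie.
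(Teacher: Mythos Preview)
Your proposal is a strategy sketch, not a proof, and the strategy has a genuine gap at its core.

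The mutation step is not established and is unlikely to work as stated. Suppose $A,B\in\Wscr$ are $(r+2)$-interlaced with cortege $(I_1,\ldots,I_{r+2})$, $A$ surrounding $B$, $|A|\le|B|$. Your candidate $A^\ast=A\triangle I_1=A\setminus I_1$ certainly becomes $(r+1)$-interlaced with $B$, but you give no argument that $A^\ast$ remains weakly $r$-separated from every other $C\in\Wscr$, nor that $A^\ast\notin\Wscr$ already (if it is, the ``replacement'' shrinks the collection). Removing $I_1$ decreases $|A|$, which can flip the direction of the size inequality in the weak-separation clause for pairs $\{A,C\}$ that were previously $(r+2)$-interlaced and legal only because $|A|$ sat on the correct side; it can also create new $(r+2)$- or $(r+3)$-interlacings with sets $C$ whose difference with $A$ had elements inside $I_1$. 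So neither the preservation of weak $r$-separation nor the monotonic decrease of your monovariant is clear. You explicitly flag this as ``the main obstacle,'' which is accurate, but without resolving it there is no proof.

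The geometric route you sketch is also more speculative than you indicate. Encoding an \emph{arbitrary} weakly $r$-separated collection as the vertex set of some relaxed zonotopal object is precisely what the paper calls Conjecture~1 (representability of maximal-by-size weakly $r$-separated collections by w-membranes), which is open for $r\ge 3$; only the forward direction (w-membranes yield weakly $r$-separated collections of size $s_{n,r}$) is proved, and that already uses Theorem~\ref{tm:w=s}. So the geometric encoding cannot serve as an input to the proof of Theorem~\ref{tm:w=s}.

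For contrast, the paper's proof avoids mutation entirely and runs a double induction on $(n,r)$. It splits $\Wscr$ via the element $n$ into a ``projection'' $\Wscr^-\subseteq 2^{[n-1]}$ (shown to be weakly $r$-separated, bounding it by induction on $n$) and a ``twin'' part $\Tscr=\{A:\,A,An\in\Wscr\}$. The heart of the argument is that each level slice $\Tscr^i=\{A\in\Tscr:|A|=i\}$ is weakly $(r-2)$-separated; one then pads $\Tscr^i$ with boundary vertices of $Z(n-1,r)$, applies induction on $r$, and finishes with a bead-thread counting in an auxiliary cubillage on $Z(n-1,r)$ to get $\sum_i|\Tscr^i|\le s_{n-1,r-1}$. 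This is a counting/induction argument, not a flip argument, and it does not require knowing anything about representability.
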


Another impressive result in~\cite{LZ} says that a weakly separated collection
can be transformed into another one by making a \emph{flip} (a sort of
mutations) ``in the presence of four witnesses''. This relies on the following
property (Theorem~7.1 in~\cite{LZ}):
  \begin{numitem1} \label{eq:four_witn}
let $\Wscr\subset 2^{[n]}$ be weakly separated, and suppose that there are
elements $i<j<k$ of $[n]$ and a set $X\subseteq[n]-\{i,j,k\}$ such that $\Wscr$
contains four sets (``witnesses'') $Xi$, $Xk$, $Xij$, $Xjk$ and a set
$U\in\{Xj,Xik\}$; then the collection obtained from $\Wscr$ by replacing $U$ by
the other member of $\{Xj,Xik\}$ is again weakly separated.
  \end{numitem1}
Hereinafter, for disjoint subsets $A$ and $\{a,\ldots,b\}$ of $[n]$, we use the
abbreviated notation $Aa\ldots b$ for $A\cup\{a,\ldots,b\}$.

We generalize~\refeq{four_witn} as follows.
  \begin{theorem} \label{tm:gen_witn}
For an odd $r$ and $r':=(r+1)/2$, let $P=\{p_1,\ldots,p_{r'}\}$ and
$Q=\{q_0,\ldots,q_{r'}\}$ consist of elements of $[n]$ such that
$q_0<p_1<q_1<p_2< \ldots<p_{r'}<q_{r'}$, and let $X\subseteq[n]-(P\cup Q)$.
Define the set of \emph{neighbors} (or ``witnesses'') of $P,Q$ to be
\begin{equation} \label{eq:neighb}
\Nscr=\Nscr(P,Q):=\{S\subset P\cup Q\,\colon\, S\ne P,Q,\; r'\le |S|\le r'+1\}.
  \end{equation}
Suppose that a weakly $r$-separated collection $\Wscr\subset 2^{[n]}$ contains
a set $U\in\{X\cup P,\,X\cup Q\}$. If, in addition, $\Wscr$ contains the sets
of the form $X\cup S$ for all $S\in\Nscr$, then the collection obtained from
$\Wscr$ by replacing $U$ by the other member $U'$ of $\{X\cup P,\, X\cup Q\}$
is weakly $r$-separated as well.
  \end{theorem}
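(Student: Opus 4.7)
The plan is to fix an arbitrary $W\in\Wscr-\{U\}$ and show that $U'$ is weakly $r$-separated from $W$; every other pair in the modified collection is already present in $\Wscr$, so this suffices. Writing $U=X\cup Q$ and $U'=X\cup P$ (the opposite assignment is symmetric) and setting $T:=W\cap(P\cup Q)$, I first observe that, since $X,P,Q$ are pairwise disjoint,
\[
(U\triangle W)\cap(P\cup Q)=(Q-T)\cup(P\cap T),\qquad
(U'\triangle W)\cap(P\cup Q)=(P-T)\cup(Q\cap T),
\]
and more generally $\bigl((X\cup S)\triangle W\bigr)\cap(P\cup Q)=S\triangle T$ for every $S\subseteq P\cup Q$; the part of the symmetric difference outside $P\cup Q$ does not depend on $S$. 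The problem therefore reduces to a local combinatorial question on the totally ordered set $q_0<p_1<q_1<\cdots<p_{r'}<q_{r'}$: knowing that the alternation pattern of $S\triangle T$ (augmented by the fixed outside pattern) produces an interval cortege of length $\le r+2$, with the surround/cardinality clause whenever length $r+2$ is attained, for $S=Q$ and for every $S\in\Nscr$, I must deduce the same for $S=P$.

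The key observation is that $\Nscr$ contains \emph{every} subset of $P\cup Q$ of size $r'$ or $r'+1$ other than $P$ and $Q$, so in particular every single-element perturbation of $P$ or of $Q$ that keeps the size in $\{r',r'+1\}$ belongs to $\Nscr$. Since flipping a single position of $S$ changes the alternation length of $S\triangle T$ by at most $2$, I would argue that any hypothetical failure of weak $r$-separation for $(U',W)$---either the cortege of $P\triangle T$ (combined with the outside pattern) exceeding length $r+2$, or the surround/cardinality clause failing at length $r+2$---can be transferred, by a judicious single-element toggle, to a failure for some $(X\cup S,W)$ with $S\in\Nscr$. This contradicts the hypothesis and yields the $r$-separation portion of the conclusion.

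The main obstacle is the surround/cardinality clause in the case that the cortege for $(U',W)$ has exactly $r+2$ intervals, because swapping $Q$ for $P$ changes cardinalities by one and can reverse which of $U',W$ surrounds the other. The cleanest route is via the fine zonotopal tiling framework developed earlier in the paper: maximal weakly $r$-separated collections correspond to vertex sets of fine zonotopal tilings of the cyclic zonotope $Z(n,r+1)$; the swap $U\leftrightarrow U'$ is a local mutation across the cubelet whose vertex labels are $\{X\cup S:S\subseteq P\cup Q,\ r'\le|S|\le r'+1\}$; and $\{X\cup S:S\in\Nscr\}$ are exactly the remaining vertices of that cubelet. Their presence in $\Wscr$ certifies that the flip is geometrically legal, and the surround/cardinality clause for $(U',W)$ is then inherited from the antipodal position of $U'$ relative to $U$ in the cubelet. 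If one prefers a direct combinatorial argument, a case split on whether $|W|$ lies below or above $(|U|+|U'|)/2$, combined with the witness in $\Nscr$ of matching cardinality whose $S\triangle T$ pattern agrees maximally with $P\triangle T$, transfers the clause from $(X\cup S,W)$ to $(U',W)$.
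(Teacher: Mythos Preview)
Your overall skeleton is right---reduce to a fixed $W\in\Wscr-\{U\}$ and try to transfer a hypothetical failure of weak $r$-separation for $(U',W)$ to some $(X\cup S,W)$ with $S\in\Nscr$---but both routes you offer for the ``main obstacle'' have genuine gaps.

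The geometric route is circular. In the paper the link between w-membranes in cubillages on $Z(n,r+2)$ (not $Z(n,r+1)$) and maximal weakly $r$-separated collections is Theorem~\ref{tm:membr-wsepar}, and its proof \emph{uses} Theorem~\ref{tm:gen_witn}: the raising flip across the central fragment of a cube is shown to preserve weak $r$-separation precisely by invoking the present theorem (see Case~3 there). So appealing to the cubillage picture here begs the question. Moreover, the converse direction---that every maximal-by-size weakly $r$-separated collection arises from a w-membrane---is not proved in the paper; it is stated as Conjecture~1.

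The combinatorial route is too vague to stand as a proof. The observation that a single-element toggle changes the cortege length by at most~$2$ is correct but points the wrong way: to derive a contradiction you need a toggle that does \emph{not} drop the length below $r+2$ (and, if it lands at exactly $r+2$, still violates the surround/cardinality clause). You give no mechanism for selecting such a toggle, and the phrase ``witness of matching cardinality whose $S\triangle T$ pattern agrees maximally with $P\triangle T$'' does not determine one. The paper's argument does exactly this work: it proves a sharper statement (Theorem~\ref{tm:PQYstr}, using only the neighbors in $\Nscr^{\uparrow}(P,Q)$ or $\Nscr^{\downarrow}(P,Q)$) via a structural lemma (Lemma~\ref{lm:refinedPQ}) asserting that if every $S\in\Nscr^{\uparrow}(P,Q)$ gives a good pair with $W$ while $\{W,XP\}$ is bad, then either every $p_i$ or every $q_j$ occurs as a singleton brick in the cortege $\Iscr(W,XP)$. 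This dichotomy, followed by a four-case analysis on the brick counts, is what produces the specific neighbor $S$ witnessing the contradiction; your sketch does not supply an equivalent.
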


(Obviously, $P$ and $Q$ are not weakly $r$-separated, and $|P\cup Q|=r+2$
easily implies that any two sets in $\Nscr\cup\{P,Q\}$ except for $P,Q$ are
weakly $r$-separated.) In general, for two weakly $r$-separated collections
$\Wscr$ and $\Wscr'$, if there are $P,Q,X$ as above such that
$\Wscr'=(\Wscr-\{X\cup P\})\cup \{X\cup Q\}$ and $\Wscr=(\Wscr'-\{X\cup
Q\})\cup \{X\cup P\}$, then we say that $\Wscr'$ is obtained from $\Wscr$ by a
\emph{raising} (combinatorial) \emph{flip}, while $\Wscr$ is obtained from
$\Wscr'$ by a \emph{lowering flip}.

Our method of proof of the above theorems (and more) appeals to a geometric
approach and uses some facts on fine zonotopal tilings, or \emph{cubillages},
on a \emph{cyclic zonotope} in a space $\Rset^d$. One of them is that the
maximal by size (strongly) $(d-1)$-separated collections $\Sscr$ in $2^{[n]}$
one-to-one correspond to the cubillages $Q$ in a cyclic zonotope $Z(n,d)$
generated by (a cyclic configuration of) $n$ vectors in $\Rset^d$; one may say
that the set of vertices of $Q$ ``encodes'' $\Sscr$. (When $d=2$, a cubillage
becomes a rhombus tiling on a planar $n$-zonogon, and a bijection between these
tilings and the maximal strongly separated collections in $2^{[n]}$ is
well-known. For $d=3$, a bijection between the corresponding cubillages and
maximal 2-separated sets was originally established in~\cite{gal}. For a
general $d$, the corresponding bijection was recently shown by Galashin and
Postnikov~\cite{GP}.)

Another useful fact, inspired by a result in the classical work due to Manin
and Schechtman~\cite{MS} on higher Bruhat orders, is that any cubillage on
$Z(n,d-1)$ can be lifted as a certain $(d-1)$-dimensional subcomplex, that we
call an \emph{s-membrane}, in some cubillage on $Z(n,d)$. For more details and
other relevant facts, see~\cite{DKK3,zieg}.

We further develop the theory of cubillages by constructing a certain
\emph{fragmentation} $\Qfrag$ of a cubillage $Q$ on $Z(n,d)$, introducing a
class of $(d-1)$-dimensional subcomplexes in $\Qfrag$, called
\emph{w-membranes}, and showing (in Theorem~\ref{tm:membr-wsepar}) that when
$d$ is odd, the vertex set of any w-membrane forms a maximal by size weakly
$(d-2)$-separated collection in $2^{[n]}$. It turns out that the collections of
this sort (over all cubillages on $Z(n,d)$) constitute a poset with a unique
minimal element and a unique maximal element and where neighboring collections
are linked by flips; this is obtained as a consequence of
Theorems~\ref{tm:membr-wsepar} and~\ref{tm:gen_witn}.

In light of this, given an odd $r$ and $n>r$, we can specify three classes
$\bfW_{n,r}$, $\bfW^=_{n,r}$ and $\bfW^\ast_{n,r}$ of weakly $r$-separated
collections $\Wscr$ in $2^{[n]}$, in which $\Wscr$ is maximal by inclusion,
maximal by size, and representable, respectively. Here we call $\Wscr$
\emph{representable} if it can be represented as the vertex set of a w-membrane
in a cubillage on $Z(n,r+2)$ (in particular, $\Wscr$ is maximal by size). Then
$\bfW_{n,r}\supseteq\bfW^=_{n,r}\supseteq\bfW^\ast_{n,r}$.
 \smallskip

This paper is organized as follows. Sect.~\SEC{prelim} contains basic
definitions and reviews some useful facts on cyclic zonotopes and cubillages.
Sect.~\SEC{smembr} recalls the construction of s-membranes in cubillages and
describes their properties needed to us. Here we also introduce the so-called
\emph{bead-thread} relation on vertices of a cubillage, which is used in the
proof of Theorem~\ref{tm:w=s}. Sect.~\SEC{proof1} is devoted to proving
Theorem~\ref{tm:w=s}, and Sect.~\SEC{proof2} proves a sharper version of
Theorem~\ref{tm:gen_witn} (given in Theorem~\ref{tm:PQYstr} and
Corollary~\ref{cor:gen_witPQ} where instead of the whole set $\Nscr(P,Q)$ of
neighbors of $P,Q$ we take into account only those neighbors that are at
distance $\le 2$ from $P$ or $Q$).

Sect.~\SEC{weaksep-cubil} introduces w-membranes in the fragmentation of a
cubillage and proves the above-mentioned results on w-membranes in a cubillage
on $Z(n,d)$ and representable $(d-2)$-separated collection in $2^{[n]}$, and on
the poset of such collections (Theorem~\ref{tm:membr-wsepar} and
Corollary~\ref{cor:posetWast}). Sect.~\SEC{non-pure} demonstrates an example of
``non-pure'' weakly $r$-separated collections $\Wscr$ in $2^{[n]}$, in the
sense that $\Wscr\in\bfW_{n,r}$ but $|\Wscr|<w_{n,r}$, thus showing that the
inclusion $\bfW_{n,r}\supseteq\bfW^=_{n,r}$ can be strict (in contrast to the
well-known purity result for $r=1$, saying that $\bfW_{n,1}=\bfW^=_{n,1}$).
Also we raise two conjectures on weakly $r$-separated set-systems (in
Sects.~\SEC{weaksep-cubil} and~\SEC{non-pure}).

Proofs of two propositions stated in Sect.~\SEC{weaksep-cubil} are given in
Appendix~A. The paper finishes with Appendix~B where we discuss a reasonable
analog of the weak $r$-separation when $r$ is even, outline some constructions
and results on this way and raise two more conjectures.

%----------------------- Sec. 2

\section{Preliminaries}  \label{sec:prelim}

This section contains additional definitions, notation and conventions that
will be needed later on. Also we review some known properties of cubillages.
 \smallskip

\noindent $\bullet$ ~Let $n,d$ be positive integers with $n\ge d>1$. By a
\emph{cyclic configuration} of size $n$ in $\Rset^d$ we mean an ordered set
$\Xi$ of $n$ vectors $\xi_i=(\xi_i(1),\ldots,\xi_i(d))\in\Rset^d$,
$i=1,\ldots,n$, satisfying:
  \begin{numitem1} \label{eq:cyc_conf}
\begin{itemize}
\item[(a)] $\xi_i(1)=1$ for each $i$, and
\item[(b)] for the $d\times n$ matrix $A$ formed by $\xi_1,\ldots,\xi_n$ as columns
(in this order), any flag minor of $A$ is positive.
  \end{itemize}
  \end{numitem1}

A typical (and commonly used) sample of such configurations $\Xi$ is generated
by the Veronese curve; namely, take reals $t_1<t_2<\cdots<t_n$ and assign
$\xi_i:=\xi(t_i)$, where $\xi(t)=(1,t,t^2,\ldots,t^{d-1})$.

The \emph{zonotope} $Z=Z(\Xi)$ generated by $\Xi$ is the Minkowski sum of line
segments $[0,\xi_i]$, $i=1,\ldots,n$. A \emph{fine zonotopal tiling} is a
subdivision $Q$ of $Z$ into $d$-dimensional parallelotopes such that: any two
intersecting ones share a common face, and each face of the boundary of $Z$ is
contained in some of these parallelotopes. For brevity, we liberally refer to
these parallelotopes as \emph{cubes}, and to $Q$ as a \emph{cubillage}.

\noindent $\bullet$ ~When $n,d$ are fixed, the choice of one or another cyclic
configuration $\Xi$ (subject to~\refeq{cyc_conf}) does not matter in essence,
and for this reason, we unify notation $Z(n,d)$ for $Z(\Xi)$, referring to it
as the \emph{cyclic zonotope} for $(n,d)$.
 \smallskip

\noindent $\bullet$ ~Let $\pi$ denote the projection $\Rset^d\to\Rset^{d-1}$
given by $(x_1,\ldots,x_d)\mapsto (x_1,\ldots,x_{d-1})$. Due
to~\refeq{cyc_conf}, the vectors $\pi(\xi_1),\ldots,\pi(\xi_n)$ form a cyclic
configuration as well, and we may say that $\pi$ projects $Z(n,d)$ to the
zonotope $Z(n,d-1)$.
 \smallskip

\noindent $\bullet$ ~Each subset $X\subseteq [n]$ naturally corresponds to the
point $\sum_{i\in X}\xi_i$ in $Z(n,d)$, and the cardinality $|X|$ is called the
\emph{height}, or \emph{level} of this subset/point. (W.l.o.g., we usually
assume that all combinations of vectors $\xi_i$ with coefficients 0,1 are
different.)
 \smallskip

\noindent $\bullet$ ~Depending on the context, we may think of a cubillage $Q$
on $Z(n,d)$ in two ways: either as a set of $d$-dimensional cubes (and write
$C\in Q$ for a cube $C$ in $Q$) or as the corresponding polyhedral complex. The
0- and 1-dimensional cells (or faces) of $Q$ are called \emph{vertices} and
\emph{edges}, respectively. A simple fact is that, by the subset-to-point
correspondence, each vertex is identified with a subset of $[n]$. In turn, each
edge $e$ is a parallel translation of some segment $[0,\xi_i]$; we say that $e$
has \emph{color} $i$, or is an $i$-\emph{edge}. When needed, $e$ is regarded as
a directed edge (according to the direction of $\xi_i$).
 \smallskip

\noindent $\bullet$ ~Let $V(Q)$ denote the set of vertices of a cubillage $Q$.
Galashin and Postnikov~\cite{GP} showed the following important correspondence
between cubillages and separated set-systems:
  \begin{numitem1} \label{eq:cub-separ}
for any cubillage $Q$ on $Z(n,d)$, the set $V(Q)$ of its vertices (regarded as
subsets of $[n]$) constitutes a maximal by size $(d-1)$-separated collection in
$2^{[n]}$; conversely, for any maximal by size $(d-1)$-separated collection
$\Sscr\subseteq 2^{[n]}$, there exists a cubillage $Q$ on $Z(n,d)$ with
$V(Q)=\Sscr$.
  \end{numitem1}

\noindent $\bullet$ ~When a cell (face) $C$ of $Q$ has the lowest point
$X\subseteq[n]$ and when $T\subseteq[n]$ is the set of colors of edges in $C$,
we say that $C$ has the \emph{root} $X$ and \emph{type} $T$, and may write
$C=(X\,|\,T)$. One easily shows that $X\cap T=\emptyset$. Another well-known
fact is that for any cubillage $Q$, the types of all ($d$-dimensional) cubes in
it are different and form the set $\binom{[n]}{d}$ of $d$-element subsets of
$[n]$ (so $Q$ has exactly $\binom{n}{d}$ cubes).
 \smallskip

\noindent $\bullet$ ~For a closed subset $U$ of points in $Z=Z(n,d)$, let
$U^{\rm fr}$ ($U^{\rm rear}$) be the part of $U$ ``seen'' in the direction of
the last, $d$-th, coordinate vector $e_d$ (resp. $-e_d$), i.e., the set formed
by the points $x\in\pi^{-1}(x')\cap U$ with $x_d$ minimum (resp. maximum) for
all $x'\in\pi(U)$. It is called the \emph{front} (resp. \emph{rear})
\emph{side} of $U$.

In particular, $\Zfr$ and $\Zrear$ denote the front and rear sides,
respectively, of (the boundary of) the zonotope $Z$. We call $\Zfr\cap\Zrear$
the \emph{rim} of $Z$ and denote it as $\Zrim$.
 \smallskip

\noindent $\bullet$ ~When a set $X\subseteq[n]$ is the union of $k$ intervals
and $k$ is as small as possible, we say that $X$ is a $k$-\emph{interval}. Note
that for such an $X$, its complementary set $[n]-X$ is a $k'$-interval with
$k'\in\{k-1,k,k+1\}$. In the next section we will use the following known
characterization of the sets of vertices in the front and rear sides of a
zonotope of an odd dimension (cf., e.g.,~\cite{DKK3}).
  \begin{numitem1} \label{eq:Zfr-Zrear}
Let $d$ be odd. Then for $Z=Z(n,d)$,
 \begin{itemize}
\item[(i)] ~$V(\Zfr)$ is formed by all $k$-intervals of $[n]$ with $k\le (d-1)/2$;
\item[(ii)] ~$V(\Zrear)$ is formed by the subsets of $[n]$ complementary to those in $V(\Zfr)$;
specifically, it consists of all $k$-intervals with $k<(d-1)/2$, all
$(d-1)/2$-intervals containing at least one of the elements 1 and $n$ and all
$(d+1)/2$-intervals containing both 1 and $n$.
\end{itemize}
    \end{numitem1}
This implies that $V(\Zrim)$ consists of the $k$-intervals with $k<(d-1)/2$ and the
$(d-1)/2$-intervals containing at least one of 1 and $n$;  the set of
\emph{inner} vertices in $\Zfr$, i.e., $V(\Zfr)-V(\Zrim)$ consists of the
$(d-1)/2$-intervals containing none of 1 and $n$, whereas $V(\Zrear)-V(\Zrim)$
consists of the $(d+1)/2$-intervals containing both 1 and $n$.
 \smallskip

\noindent $\bullet$ ~ Consider a cube $C=(X\,|\,T)$ and let
$T=(p(1)<p(2)<\cdots<p(d))$. This cube has $2d$ facets
$F_1,\ldots,F_d,G_1,\ldots,G_d$, where
  \begin{numitem1} \label{eq:cube_facets}
$F_i=F_i(C)$ is viewed as $(X\,|\,T-p(i))$, and $G_j=G_j(C)$ as
$(Xp(j)\,|\,T-p(j))$.
  \end{numitem1}
(For a set $A$ and an element $a\in A$, we abbreviate $A-\{a\}$ to $A-a$.)

%----------------------- Sec. 3

\section{S-membranes and bead-threads} \label{sec:smembr}

In this section we recall the definition of s-membranes, associate with a
cubillage a certain path structure, and review some basic properties.
\medskip

\noindent\textbf{Definition.} ~Let $Q$ be a cubillage on $Z(n,d)$. An
\emph{s-membrane} in $Q$ is a subcomplex $M$ of $Q$ such that $M$ (regarded as
a subset of $\Rset^d$) is \emph{bijectively} projected by $\pi$ to $Z(n,d-1)$.
  \smallskip

Then each facet of $Q$ occurring in $M$ is projected to a cube of dimension $d-1$ in $Z(n,d-1)$
and these cubes constitute a cubillage on $Z(n,d-1)$, denoted as $\pi(M)$. In
view of~\refeq{cub-separ} and~\refeq{snr} (applied to $\pi(Q)$), we obtain that
  \begin{numitem1} \label{eq:size_membr}
all s-membranes $M$ in a cubillage $Q$ on $Z(n,d)$ have the same number of
vertices, which is equal to $s_{n,d-2}$, and the vertex set of $M$ (regarded as
a collection in $2^{[n]}$) is $(d-2)$-separated.
  \end{numitem1}

Two s-membranes are of an especial interest. These are the front side $\Zfr$
and the rear side $\Zrear$ of $Z=Z(n,d)$ (in these cases the choice of a
cubillage on $Z$ is not important.) Following terminology in~\cite{DKK2,DKK3},
their projections $\pi(\Zfr)$ and $\pi(\Zrear)$ are called the \emph{standard}
and \emph{anti-standard} cubillages on $Z(n,d-1)$, respectively.

Next we distinguish certain vertices in cubes. When $n=d$, the zonotope turns
into the cube $C=(\emptyset|[d])$, and there holds:
 \begin{numitem1} \label{eq:unique_inner}
the front side $\Cfr$ (rear side $\Crear$) of $C=(\emptyset|[d])$ has a unique
inner vertex, namely, $t_C:=\{i\in[n]\colon d-i$ odd$\}$ (resp.
$h_C:=\{i\in[n]\colon d-i$ even$\}$.
  \end{numitem1}

(When $d$ is odd,~\refeq{unique_inner} can be obtained from~\refeq{Zfr-Zrear}. A
direct proof of~\refeq{unique_inner} for an arbitrary $d$ is as follows (a
sketch). The facets of $C$ are $F_i=F_i(C):=(\emptyset| [d]-i)$ and
$G_i=G_i(C):=(i|[d]-i)$, $i=1,\ldots,d$ (cf.~\refeq{cube_facets}). A facet $F_i$
is contained in $\Cfr$ ($\Crear$) if, when looking at the direction $e_d$, $C$
lies ``behind'' (resp. ``before'') the hyperplane containing $F_i$, or,
equivalently, $\det(A_i)>0$ (resp. $\det(A_i)<0$), cf.~\refeq{cyc_conf}(b),
where $A_i$ is the matrix with the columns
$\xi_1,\ldots,\xi_{i-1},\xi_{i+1},\ldots \xi_d,\xi_i$ (in this order). It
follows that $F_i\subset \Cfr$ if and only if $d-i$ is even. By ``central
symmetry'', $G_i\subset \Cfr$ if and only if $d-i$ is odd.

Now consider a vertex $X\subseteq [d]$ of $C$. If $X$ (resp. $[d]-X$) has
consecutive elements $i-1$ and $i$, then $X\in G_{i-1}$ and simultaneously $X\in G_i$
(resp. $X\in F_{i-1}$ and $X\in F_i$). This implies that $X$ is in both $\Cfr$
and $\Crear$, i.e., $X\in \Crim$. The remaining vertices of $C$ are just $t_C$
and $h_C$ as in~\refeq{unique_inner}; one can see that the former (latter) is
contained in all facets $F_j$ and $G_i$ with $d-j$ even and $d-i$ odd (resp.
$d-j$ odd and $d-i$ even). So $t_C$ lies in $\Cfr$, and $h_C$ in $\Crear$;
moreover, both are not in $\Crim$ (since $C$ is full-dimensional).)
  \medskip

When $n$ is arbitrary and $Q$ is a cubillage on $Z=Z(n,d)$, we distinguish
vertices $t_C$ and $h_C$ of a cube $C\in Q$ in a similar way; namely
(cf.~\refeq{unique_inner}),
 \begin{numitem1} \label{eq:gencube_inner}
if $C=(X\,|\,T)$ and $T=(p_1<\ldots <p_d))$, then $t_C=X\cup\{p_i\colon d-i$
odd$\}$ and $h_C=X\cup\{p_i\colon d-i$ even$\}$.
  \end{numitem1}

Also for each vertex $v$ of $Q$, unless $v$ is in $\Zrear$, there is a unique
cube $C\in Q$ such that $t_C=v$, and symmetrically, unless $v$ is in $\Zfr$,
there is a unique cube $C\in Q$ such that $h_C=v$ (to see this, consider the
line going through $v$ and parallel to $e_d$).

Therefore, by drawing for each cube $C\in Q$, the edge-arrow from $t_C$ to
$h_C$, we obtain a directed graph whose connectivity components are directed
paths beginning at $\Zfr-\Zrim$ and ending at $\Zrear-\Zrim$. We call these
paths \emph{bead-threads} in $Q$. It is convenient to add to this graph the
elements of $V(\Zrim)$ as isolated vertices, forming \emph{degenerate}
bead-threads, each going from a vertex to itself. Let $B_Q$ be the resulting
directed graph. Then
  \begin{numitem1} \label{eq:BQ}
~$B_Q$ contains  all vertices of $Q$, and each component of $B_Q$ is a
bead-thread going from $\Zfr$ to $\Zrear$.
  \end{numitem1}

Note that the heights $|X|$ of vertices $X$ along a bead-thread are monotone
increasing when $d$ is odd, and constant when $d$ is even.

%----------------------- Sec. 4

\section{Proof of Theorem~\ref{tm:w=s}}  \label{sec:proof1}

Let $r$ be odd and $n>r$. We have to show that
  \begin{numitem1} \label{eq:sizeW}
~if $\Wscr$ is a weakly $r$-separated collection in $2^{[n]}$, then $|\Wscr|\le
\binom{n}{\le r+1}$.
  \end{numitem1}

This is valid when $r=1$ (cf.~\refeq{max_size}) and is trivial when $n=r+1$. So
one may assume that $3\le r\le n-2$. We prove~\refeq{sizeW} by induction,
assuming that the corresponding inequality holds for $\Wscr',n',r'$ when $n'\le
n$, $r'\le r$, and $(n',r')\ne (n,r)$.

Define the following subcollections in $\Wscr$:
  \begin{eqnarray}
  \Wscr^-&:=& \{A\subseteq [n-1]\,\colon\,
  \{A,An\}\cap\Wscr\ne\emptyset\},\quad \mbox{and} \nonumber \\
  \Tscr&:=& \{A\subseteq [n-1]\,\colon\, \{A,An\}\subseteq \Wscr\}, \nonumber
  \end{eqnarray}
referring to elements $A,An$ in $\Wscr$ as \emph{twins}. Observe that
  \begin{numitem1} \label{eq:ABW-}
~any $A,B\in \Wscr^-$ are weakly $r$-separated.
  \end{numitem1}

Indeed, this is trivial when $A,B\in\Wscr$ or $An,Bn\in\Wscr$. Assume that
$A\in\Wscr$ and $B':=Bn\in\Wscr$, and that $A,B'$ are $(r+2)$-interlaced (for if
$A,B'$ are $r'$-interlaced with $r'\le r+1$, then so is for $A,B$, and we are
done). Since $\max(B'-A)=n>\max(A-B')$ and $r+2$ is odd, $B'$ surrounds $A$.
Therefore, $\min(B'-A)<\min(A-B')$ and $|B'|\le |A|$. Then $|B|<|A|$ and
$\min(B-A)=\min(B'-A)<\min(A-B)$, implying that $A,B$ are weakly $r$-separated,
as required.

By induction, $|\Wscr^-|\le\binom{n-1}{\le r+1}$. Also one can observe that
$|\Wscr|=|\Wscr^-|+|\Tscr|$. Therefore, using the identity
$\binom{n}{j}=\binom{n-1}{j} +\binom{n-1}{j-1}$ for any $j\le n-1$, in order to
obtain the inequality in~\refeq{sizeW}, it suffices to show that
  \begin{equation} \label{eq:Tscr}
  |\Tscr|\le\small{\binom{n-1}{\le r}}.
  \end{equation}

For $i=0,1,\ldots n-1$, define $\Tscr^i:=\{A\in\Tscr\,\colon\, |A|=i\}$. We
will rely on two claims.
 \medskip

\noindent\textbf{Claim 1} ~\emph{For each $i$, the collection $\Tscr^i$ is
$(r-1)$-separated; moreover, $\Tscr^i$ is weakly $(r-2)$-separated}.
 \medskip

\noindent\textbf{Proof} ~Let $A,B\in\Tscr^i$. Take the interval cortege
$(I_1,\ldots,I_{r'})$ for $A,B$, and let for definiteness $I_{r'}\subseteq
A-B$. Then $(I_1,\ldots,I_{r'},I_{r'+1}:=\{n\})$ is the interval cortege for
$A$ and $B':=Bn$. Since $|A|<|B'|$ and $\max (A-B')<\max(B'-A)=n$, the fact
that $A,B'$ are weakly $r$-separated implies that $r'+1$ is strictly less than
$r+2$. Then $r'\le r$, which means that $A,B$ are $(r-1)$-separated. Since
$|A|=|B|$ and $r$ is odd, we also can conclude that $A,B$ are weakly
$(r-2)$-separated. \hfill\qed
 \medskip

Now consider the zonotope $Z=Z(n-1,r)$. For $j=0,1,\ldots,n-1$, define
$\Sscr^j$ ($\Ascr^j$) to be the set of vertices $X$ of $\Zfr$ (resp. $\Zrear$)
with $|X|=j$. We extend each collection $\Tscr^i$ to $\Dscr^i$, defined as
  \begin{equation} \label{eq:Dscr_i}
\Dscr^i:=\Tscr^i\cup (\Sscr^{i+1}\cup\ldots \cup \Sscr^{n-1}) \cup
      (\Ascr^0\cup\Ascr^1\cup\ldots \cup\Ascr^{i-1}).
  \end{equation}

\noindent\textbf{Claim 2} ~\emph{$\Dscr^i$ is weakly $(r-2)$-separated.}
\medskip

\noindent\textbf{Proof} ~The vertex sets of $\Zfr$ and $\pi(\Zfr)$ are
essentially the same (regarding a vertex as a subset of $[n-1]$), and similarly
for $\Zrear$ and $\pi(\Zrear)$. Since $\pi(\Zfr)$ and $\pi(\Zrear)$ are
cubillages on $Z(n-1,r-1)$ (the so-called ``standard'' and ``anti-standard''
ones), ~\refeq{cub-separ} implies that both collections
$V(\Zfr)=\Sscr^0\cup\ldots\cup \Sscr^{n-1}$ and
$V(\Zrear)=\Ascr^0\cup\ldots\cup \Ascr^{n-1}$ are $(r-2)$-separated, and
therefore, they are weakly $(r-2)$-separated as well.

Next, by~\refeq{Zfr-Zrear}(i), each vertex $X$ of $\Zfr$ is a $k$-interval with
$k\le(r-1)/2$. Such an $X$ and any subset $Y\subseteq[n-1]$ are $k'$-interlaced
with $k'\le 2k+1$. Then $k'\le r$ and this holds with equality when $X$ and $Y$
are $r$-interlaced and $Y$ surrounds $X$. It follows that $X$ is weakly
$(r-2)$-separated from any $Y\subseteq[n-1]$ with $|Y|\le|X|$ (in particular,
if $X\in\Sscr^j$ and $j\ge i$, then $X$ is weakly $(r-2)$-separated from each
member of $\Tscr^i\cup\Ascr^0\cup\ldots\cup \Ascr^{i-1}$).

Symmetrically, by~\refeq{Zfr-Zrear}(ii), each vertex $X$ of $\Zrear$ is the
complement (to $[n-1]$) of a $k$-interval with $k\le(r-1)/2$. We can conclude
that such an $X$ is weakly $(r-2)$-separated from any $Y\subseteq[n-1]$ with
$|Y|\ge|X|$.

Now the result is provided by the inequalities $|X|>|A|>|X'|$ for any $X\in
\Sscr^{i+1}\cup\ldots\cup\Sscr^{n-1}$, \;$A\in\Tscr^i$, and
$X'\in\Ascr^0\cup\ldots\cup \Ascr^{i-1}$. \hfill\qed
 \medskip

By induction, $|\Dscr^i|\le\binom{n-1}{\le r-1}$. Then, using~\refeq{snr}
and~\refeq{size_membr} (relative to $n-1$ and $r-2$), we have
  \begin{equation} \label{eq:Di_VZfr}
|\Dscr^i|\le\small{\binom{n-1}{\le r-1}}=s_{n-1,r-2}=|V(\Zfr)|.
  \end{equation}

Let $\Sscr':=\Sscr^0\cup\Sscr^1\cup\ldots\cup \Sscr^i$ and
$\Ascr':=\Ascr^0\cup\Ascr^1\cup\ldots\cup\Ascr^{i-1}$. Since $\Sscr^{i+1}\cup
\ldots\cup \Sscr^{n-1}=V(\Zfr)-\Sscr'$, we obtain from~\refeq{Dscr_i}
and~\refeq{Di_VZfr} that
  \begin{equation} \label{eq:T-D-S-A}
  |\Tscr^i|=|\Dscr^i|-(|V(\Zfr)-\Sscr'|)-|\Ascr'|\le |\Sscr'|-|\Ascr'|.
  \end{equation}

We now finish the proof by using a bead-thread techniques (as in
Sect.~\SEC{smembr}). Fix an arbitrary cubillage $Q$ in $Z=Z(n-1,r)$. Let
$\Rscr^i$ be the set of vertices $X$ of $Q$ with $|X|=i$, and let $\Bscr$ be
the set of paths (bead-threads) in the graph $B_Q$ beginning at $\Zfr$ and
ending at $\Zrear$. Since $r$ is odd, each edge $(X,Y)$ of $B_Q$ is
``ascending'' (satisfies $|Y|>|X|$). This implies that each path $P\in\Pscr$
beginning at $\Sscr'$ must meet either $\Rscr^i$ or $\Ascr'$, and conversely,
each path meeting $\Rscr^i\cup\Ascr'$ begins at $\Sscr'$. This
and~\refeq{T-D-S-A} imply
     $$
  |\Tscr^i|\le|\Rscr^i|.
  $$

Summing up these inequalities for $i=0,1,\ldots,n-1$, we have
  $$
 |\Tscr|=\sum\nolimits_i |\Tscr^i|\le\sum\nolimits_i|\Rscr^i| =|V_Q|= s_{n-1,r-1}
  =\small{\binom{n-1}{\le r}},
  $$
yielding~\refeq{Tscr} and completing the proof of Theorem~\ref{tm:w=s}.
\hfill\qed\qed

%----------------------- Sec. 5

\section{Proof of Theorem~\ref{tm:gen_witn}}  \label{sec:proof2}

Let $r,\,r',\,P=\{p_1,\ldots,p_{r'}\},\, Q=\{q_0,\ldots,q_{r'}\}$ and $X$ be as
in the hypotheses of Theorem~\ref{tm:gen_witn} (where $r$ is odd and
$r'=(r+1)/2$). We will use the following notation and terminology.

Let $A,B\subset[n]$. The interval cortege for $A,B$ is denoted by $\Iscr(A,B)$,
and when it is not confusing, we refer to the intervals in it concerning $A-B$
($B-A$) as $A$-\emph{bricks} (resp. $B$-\emph{bricks}). When $A\cap
B=\emptyset$, we may abbreviate $A\cup B$ as $AB$. When $A,B$ are not weakly
$r$-separated, we say that the pair $\{A,B\}$ is \emph{bad}.

Note that for $P,Q,X$ as above and for the set $\Nscr(P,Q)$ of neighbors of
$P,Q$ (defined in~\refeq{neighb}), there hold: $PX$ and $QX$ are
$(r+2)$-interlaced; $XQ$ surrounds $XP$; ~$|XQ|>|XP|$; and $\{XP,XQ\}$ is the
unique bad pair in the collection $\{XS\colon S\in \{P,Q\}\cup\Nscr(P,Q)\}$.

We are going to obtain a sharper version of Theorem~\ref{tm:gen_witn}. In
particular, it deals with only $O(r^2)$ (rather than exponentially many)
neighbors of $(P,Q)$.

  \begin{theorem} \label{tm:PQYstr}
Let $r,n,P,Q,X$ be as above. Define
  \begin{eqnarray}
  \Nscr^\uparrow(P,Q) &:=& \{Pq\colon q\in Q\}\cup \{(P-p)q\colon
  p\in P,\, q\in Q\}; \quad\mbox{and} \label{eq:NupP}\\
   \Nscr^\downarrow(P,Q) &:=& \{Q-q\colon q\in Q\}\cup \{(Q-q)p\colon
  p\in P,\, q\in Q\}. \label{eq:NdownQ}
  \end{eqnarray}
Let $Y\subset[n]$ be different from $XP$ and $XQ$. Then:

{\rm(i)} if $\{Y,XP\}$ is bad, then there exists $S\in\ \Nscr^\uparrow(P,Q)$
such that $\{Y,XS\}$ is bad;

{\rm(ii)} if $\{Y,XQ\}$ is bad, then there exists $S\in\ \Nscr^\downarrow(P,Q)$
such that $\{Y,XS\}$ is bad.
  \end{theorem}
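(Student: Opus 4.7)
The plan is to work directly with the interval corteges, tracking how $\Iscr(Y,XT)$ changes as $T$ ranges over $\{P\}\cup\Nscr^\uparrow(P,Q)$ in part (i) (and over $\{Q\}\cup\Nscr^\downarrow(P,Q)$ in part (ii)). A first observation pins down the contribution of each element of $P\cup Q$ to $\Iscr(Y,XP)$: a point $q_j\in Q$ appears on the $Y$-side precisely when $q_j\in Y$ (and is absent from the cortege otherwise), while a point $p_j\in P$ appears on the $(XP)$-side precisely when $p_j\notin Y$. All remaining contributions to $\Iscr(Y,XP)$ come from $X-Y$ (on the $(XP)$-side) and from $Y-(X\cup P\cup Q)$ (on the $Y$-side); crucially, these are \emph{invariant} under any replacement of $P$ by a subset $S\subseteq P\cup Q$.

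For part (i), assume $\{Y,XP\}$ is bad, meaning either $\Iscr(Y,XP)$ has at least $r+3$ bricks, or it has exactly $r+2$ bricks but the surround/size condition fails (either $XP$ surrounds $Y$ with $|XP|>|Y|$, or $Y$ surrounds $XP$ with $|Y|>|XP|$). Replacing $P$ by $S=Pq$ flips the status of the single position $q$ in the cortege (inserting it on the $(XP)$-side if $q\notin Y$, or deleting it from the $Y$-side if $q\in Y$), and replacing $P$ by $S=(P-p)q$ performs such a flip simultaneously at $p$ and at $q$. The heart of the argument is a case analysis locating the source of the badness within the alternating skeleton $q_0,p_1,q_1,\ldots,p_{r'},q_{r'}$, and exhibiting a single flip (resp.\ paired flip) that preserves either the excess number of bricks or the violated surround/size inequality. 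In broad strokes, an extra $Y$-brick in the interior of the range $[q_0,q_{r'}]$ is handled by $S=Pq$ for a suitable $q\in Q\cap Y$; an extra $(XP)$-brick is handled by a swap $S=(P-p)q$; and badness driven by the extremal elements of $Y\triangle XP$ is handled by taking $Pq_0$ or $Pq_{r'}$.

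Part (ii) is entirely parallel: removing $q$ from $Q$ flips the status of $q$ in $\Iscr(Y,XQ)$ (deleting it from the $Y$-side if $q\in Y$, inserting it on the $(XQ)$-side if $q\notin Y$), and the swap $(Q-q)p$ performs two such flips. The same kind of case analysis, with the roles of the $Y$-side and $(XQ)$-side interchanged relative to part (i), produces the required $S\in\Nscr^\downarrow(P,Q)$.

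Main obstacle: the delicate step is to rule out that a single local flip accidentally restores weak $r$-separation by simultaneously lowering the number of bricks and reversing the surround/size sign. This demands careful accounting of which side of the cortege gains or loses each brick. The swap neighbors $(P-p)q$ and $(Q-q)p$ are built into $\Nscr^\uparrow$ and $\Nscr^\downarrow$ precisely to cover the "wrong parity" cases where a single insertion/deletion would restore weak separation; the argument must verify that across all $O(r^2)$ candidates at least one preserves the badness. The most intricate subcase will be when $Y$ mismatches $XP$ at exactly $r+2$ positions inside $P\cup Q$ and the cardinality condition fails by exactly one unit, which forces combining an extremal choice of $q$ with an internal adjustment provided by a swap.
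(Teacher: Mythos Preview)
Your outline correctly identifies the mechanics---how inserting $q$ or swapping $p\leftrightarrow q$ alters $\Iscr(Y,XP)$---and this is indeed the raw material of the proof. But what you have written is a plan, not a proof: the sentence ``the argument must verify that across all $O(r^2)$ candidates at least one preserves the badness'' is the entire content of the theorem, and you have not verified it. Your ``broad strokes'' claims (e.g.\ that an extra $Y$-brick is handled by $S=Pq$ for suitable $q\in Q\cap Y$) are not obviously true: removing such a $q$ from the $Y$-side can drop the brick count by one \emph{and} flip the surrounding direction, so badness need not survive. You acknowledge this in your ``main obstacle'' paragraph but never resolve it.

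The paper's proof supplies the missing organizing idea in the form of a structural lemma (Lemma~\ref{lm:refinedPQ}): assuming \emph{every} $S\in\Nscr^\uparrow(P,Q)$ gives a good pair $\{Y,XS\}$, one shows that either every $p_i$ forms a singleton $XP$-brick in $\Iscr(Y,XP)$, or every $q_j$ forms a singleton $Y$-brick. This dichotomy drastically constrains the shape of $\Iscr(Y,XP)$ and reduces the remaining work to four concrete cases (on the number of $Y$-bricks and $XP$-bricks), each dispatched in a few lines. Without this lemma your case analysis has no anchor and it is not clear it terminates.

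A second point: you treat part~(ii) as ``entirely parallel,'' requiring the whole analysis to be redone with roles swapped. The paper instead derives~(ii) from~(i) in one paragraph by the complementation $A\mapsto [n]-A$, which preserves interval corteges, reverses surrounding, and sends $XQ$ to $X'P$ for $X'=[n]-XPQ$; under this map $\Nscr^\downarrow(P,Q)$ corresponds exactly to $\Nscr^\uparrow(P,Q)$. This symmetry is worth noting.
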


(To obtain Theorem~\ref{tm:gen_witn}, consider $\Wscr,U$ as in that
theorem and let $U'$ be the member of $\{XP,XQ\}$ different from $U$. Suppose
that $\{Y,U'\}$ is bad for some $Y\in\Wscr-\{U\}$. By Theorem~\ref{tm:PQYstr}
applied to $Y,U'$, there exists $S\in\Nscr(P,Q)$ such that $\{Y,XS\}$ is bad.
But $\Wscr$ is weakly $r$-separated and contains both $Y$ and $XS$.)
 \medskip

\begin{proof}
W.l.o.g., one may assume that $Y\cap X=\emptyset$. We first prove assertion~(i)
(obtaining (ii) as a consequence, as we explain in the end of the proof). We
will abbreviate the neighbor set $\Nscr^\uparrow(P,Q)$ as $\Nscrup$, and the
interval cortege $\Iscr(Y,XP)$ as $\Iscr$.

Suppose, for a contradiction, that no pair $\{Y,XS\}$ with $S\in\Nscrup$ is
bad. This will impose restrictions on $Y$ and eventually will lead us to the
conclusion that $Y$ is impossible.

The core of the proof consists in the next lemma. Here we refer to an element
$p\in P$ ($q\in Q$) as \emph{refined} if it forms the single-element $XP$-brick
$\{p\}$ (resp. the single-element $Y$-brick $\{q\}$) in $\Iscr$.

 \begin{lemma} \label{lm:refinedPQ}
Let $Y\subset[n]$ be different from $XP,XQ$ and suppose that for each
$S\in\Nscrup(P,Q)$, the sets $Y$ and $XS$ are weakly $r$-separated (while
$\{Y,XP\}$ is bad). Then at least one of the following holds:

{\rm($\ast$)} ~all elements of $P$ are refined;

{\rm($\ast\ast$)} ~all elements of $Q$ are refined.
 \end{lemma}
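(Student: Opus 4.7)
~The natural strategy is by contradiction: suppose that neither $(\ast)$ nor $(\ast\ast)$ holds, so that there exist $p^\ast\in P$ not refined and $q^\ast\in Q$ not refined; the aim is to construct some $S\in\Nscrup(P,Q)$ for which $\{Y,XS\}$ is also bad, contradicting the hypothesis of the lemma.

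To organize the combinatorics, I would reinterpret $\Iscr(Y,XP)=(I_1,\ldots,I_k)$ as a labelled word: sort the elements of $Y\triangle XP$ in increasing order and label each element $+$ if it lies in $XP-Y$ and $-$ if it lies in $Y-XP$; then the intervals $I_j$ correspond to maximal monochromatic runs of this word. Under this dictionary, $p\in P$ is refined precisely when $p\notin Y$ (so $p$ carries label $+$) and both of its neighbours in the sorted word (if any) carry label $-$, and symmetrically for $q\in Q$. Since $\{Y,XP\}$ is bad, either $k\ge r+3$, or $k=r+2$ and the surround condition fails.

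Next, I would analyse how $\Iscr(Y,XS)$ arises from $\Iscr(Y,XP)$ by a local edit of this word: for $S=Pq$ with $q\in Q$ one either deletes a $-$ at position $q$ (if $q\in Y$) or inserts a $+$ there (if $q\notin Y$); for $S=(P-p)q$ one additionally either deletes a $+$ at $p$ (if $p\notin Y$) or inserts a $-$ there (if $p\in Y$). Each single-position edit changes $k$ by $0$ or $\pm2$ and may flip which side is surrounded. With this in hand I would carry out the case split dictated by why $p^\ast$ and $q^\ast$ fail to be refined, each with the two subcases "in $Y$" versus "not in $Y$ but sitting inside a brick of length $\ge2$". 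For each combination I would pick an adapted witness $S$ (the natural candidates being $S=Pq^\ast$, $S=(P-p^\ast)q^\ast$, or $S=(P-p^\ast)q$ for a suitable adjacent $q$) and verify that the resulting edit still leaves $\ge r+2$ runs and, whenever exactly $r+2$ runs remain, keeps the surround condition violated.

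The main obstacle I anticipate is the borderline situation where $\Iscr(Y,XP)$ already has exactly $r+2$ runs and is bad only through the surround condition: here a carelessly chosen single-position edit might drop two runs and accidentally produce a weakly $r$-separated pair. The remedy will be to use combined insertion/deletion edits (of type $S=(P-p)q$) that preserve both a high run count and the wrong surround orientation, exploiting that the "extra interlacing" jointly witnessed by $p^\ast$ and $q^\ast$ is immune to any admissible edit in $\Nscrup$. The presence of elements of $X$ interleaved between consecutive $p_i,q_j$ also requires careful bookkeeping, as they can merge or extend bricks and locally mimic refinement.
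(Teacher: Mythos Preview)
Your plan is essentially the paper's proof: argue by contradiction with a non-refined $p\in P$ and $q\in Q$, track how the interval cortege $\Iscr(Y,XP)$ changes under the edits $XP\mapsto XPq$ and $XP\mapsto X(P-p)q$, and show one of these neighbors remains bad. The ``labelled word/run'' language is exactly the interval-cortege bookkeeping the paper uses.

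One point worth flagging: the obstacle you anticipate (an edit dropping two runs and accidentally curing badness) does not actually arise, and the paper's proof turns precisely on this. The definition of ``$p$ is not refined'' (resp.\ ``$q$ is not refined'') is exactly the condition under which deleting the $+$ at $p$ / inserting a $-$ at $p$ (resp.\ the analogous $q$-edit) is \emph{monotone}: it never lowers the brick count, and when the count is unchanged the cortege keeps the same type (same first/last brick ownership). The paper establishes this by a short case list (Cases~1--4), and then the conclusion is immediate: if the $q$-edit strictly raises the count, $XPq$ is bad; if not, compose with the $p$-edit to get $X(P-p)q$, and monotonicity plus $|X(P-p)q|=|XP|$ forces badness there. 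So you will not need the ``combined-edit remedies'' you sketch for the borderline $k=r+2$ situation; once you record the monotonicity for non-refined elements, every case collapses uniformly. (Your side remark that a single edit changes $k$ by $0$ or $\pm 2$ is also slightly off: when $p$ or $q$ lies strictly before the first brick or after the last, the change can be $+1$.)
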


This lemma will be proved later, and now assuming its validity, we finish the
proof of the theorem as follows. Note that $Y\cap P\ne \emptyset$ is possible
(whereas $Y\cap X=\emptyset$, as is assumed above).
 \smallskip

Let $a$ and $b$ denote the numbers of $Y$- and $XP$-bricks in $\Iscr$,
respectively. Then $a+b=|\Iscr|\ge r+2=2r'+1$ and $|a-b|\le 1$. We assume that
the intervals in $\Iscr$ are viewed as $\ldots
<A_{i-1}<B_i<A_i<B_{i+1}\ldots$\;, where $A_{i'}$ ($B_{i'}$) stands for a
$Y$-brick (resp. $XP$-brick). The first (last) $Y$-brick is denoted  by
$A^m$ (resp. $A^M$), and the first (last) $XP$-brick by $B^m$ (resp. $B^M$). Also for a set $C\subset [n]$ and a singleton $c\in[n]$, we write $c<C$ ($c>C$) if $c<\min(C)$ (resp. $c>\max(C)$).

We first assume that $(\ast\ast)$ from Lemma~\ref{lm:refinedPQ} is valid. Then
$a\ge |Q|=r'+1$. Consider two possible cases for $a$.
 \medskip

\noindent\underline{\emph{Case I}}: $a\ge r'+2$. Then $b\ge r'+1$ and
$|\Iscr|\ge 2r'+3$. If $q_0<B^m$, then, obviously, $A^m=\{q_0\}$. Taking
$S:=Pq_0\in\Nscrup(P,Q)$, we obtain $|\Iscr(Y,XS)|=|\Iscr|-1\ge 2r'+2$ (since
the $Y$-brick $\{q_0\}$ disappears, while the other bricks of $\Iscr$
preserve). Hence $\{Y,XS\}$ is bad. Similarly, if $B^M<q_{r'}$, then
$A^M=\{q_{r'}\}$, and taking $S:=Pq_{r'}$, we again obtain $|\Iscr(Y,XS)|\ge
2r'+2$, whence $\{Y,XS\}$ is bad.

So we may assume that $B^m<q_0$ and $q_{r'}<B^M$. Then $b\ge r'+2$ and
$|\Iscr|=a+b\ge 2r'+4$. Taking $S:=Pq_0$, we obtain $|\Iscr(Y,XS)|=|\Iscr|-2\ge
2r'+2$ (since the $Y$-brick $\{q_0\}$ disappears and the $X..$-bricks preceding
and succeeding $\{q_0\}$ merge). Thus, in all cases, $\{Y,XS\}$ is bad; a
contradiction.
 \medskip

\noindent\underline{\emph{Case II}}: $a=r'+1$. Then $A^m=\{q_0\},\{q_1\},
\ldots, \{q_{r'}\}=A^M$ are exactly the $Y$-bricks of $\Iscr$. If $B^m<q_0$ and
$B^M<q_{r'}$, then $b=a=r'+1$ and $|\Iscr|=2r'+2$. Taking $S:=Pq_{r'}$, we
obtain $|\Iscr(Y,XS)|=|\Iscr|-1= 2r'+1$. Also $XS$ surrounds $Y$ (since $B^M$
becomes the last interval in $\Iscr(Y,XS)$). Hence $|Y-XS|=r'<r'+1\le |XS-Y|$,
implying that $\{Y,XS\}$ is bad.

Similarly, if $q_0<B^m$ and $q_{r'}<B^M$, then $S:=Pq_0$ gives
$|\Iscr(Y,XS)|=|\Iscr|-1= 2r'+1$, and $XS$ surrounds $Y$ as well. And if $B^m<
q_0$ and $q_{r'}<B^M$, then $b=r'+2$, and for $S:=Pq_0$, we obtain
$|\Iscr(Y,XS)|=|\Iscr|-2= 2r'+1$. Again, $XS$ surrounds $Y$, whence $\{Y,XS\}$
is bad.

So it remains to consider the situation when $q_0<B^m$ and $B^M<q_{r'}$. Then $b=r'$ and $Y$ surrounds $XP$.  Since
$\{Y,XP\}$ is bad and $Y-XP=Q$, we have $r'+1=|Y-XP|>|XP-Y|\ge r'$. It
follows that $|XP-Y|=r'$. This implies that each $XP$-brick is a singleton, and that if $Y\cap P=\emptyset$, then the $XP$-bricks of
$\Iscr$ are exactly $\{p_1\},\ldots,\{p_{r'}\}$. But then $X=\emptyset$ and
$Y=Q=XQ$, contradicting the hypotheses of the theorem.

Therefore, $Y$ must contain an element $p_i$ for some $i$. Then $p_i\notin B_i$
(in view of $|B_i|=1$ and $p_i\notin XP-Y$). So one of two situations takes
place: $q_{i-1}<p_i<B_i<q_i$, or $q_{i-1}<B_i<p_i<q_i$. Define
$S:=(P-p_i)q_{i-1}$ in the former case, and $S:=(P-p_i)q_i$ in the latter case.
The transformation $XP\mapsto XS$ replaces the $Y$-brick $\{q_{i-1}\}$ or
$\{q_i\}$ by $\{p_i\}$. We have: $|\Iscr(Y,XS)|=|\Iscr|=2r'+1$, $Y$ surrounds
$XS$, and $|Y|>|XS|$. Hence $\{Y,XS\}$ is bad; a contradiction.
\medskip

Next we assume that $(\ast)$ from Lemma~\ref{lm:refinedPQ} is valid. Then $b\ge
r'$ and each $p_i\in P$ forms the $XP$-brick $\{p_i\}$ in $\Iscr$ (admitting
other $XP$-bricks); in particular, $Y\cap P=\emptyset$. Consider two
possibilities for $b$.
 \medskip

\noindent\underline{\emph{Case III}}: $b=r'$. Then $\{p_1\},\ldots,\{p_{r'}\}$
are exactly the $XP$-bricks of $\Iscr$, $X=\emptyset$, and $a=r'+1$ (in view of
$|\Iscr|\ge 2r'+1$). So $Y$ surrounds $P=XP$, and $|Y|>|P|$. Assuming
that $(\ast\ast)$ from Lemma~\ref{lm:refinedPQ} is violated, there is $i\in
\{0,\ldots,r'\}$ such that $\{q_i\}$ is not a $Y$-brick of $\Iscr$. Then
either~(a) $q_i$ lies in some $Y$-brick $A_j$ with $|A_j|\ge 2$, or~(b) $q_i$
is in none of the intervals of $\Iscr$.

In case~(a), set $S:=Pq_i$. Note that $|Y|\ge r'+2=|P|+2$ (in view of $a=r'+1$
and $|A_j|\ge 2$). If $q_i\in Y$, then the transformation $P\mapsto S$ replaces
$A_j$ by a (possibly smaller but nonempty) $Y$-brick in $\Iscr(Y,S)$, while
preserving the other intervals of $\Iscr$. It follows that
$|\Iscr(Y,S)|=|\Iscr|=2r'+1$, $Y$ surrounds $S$, and $|Y|>|P|+1=|S|$; so
$\{Y,S\}$ is bad. And if $q_i\notin Y$, then, obviously,
$\min(A_j)<q_i<\max(A_j)$. This gives $|\Iscr(Y,S)|=|\Iscr|+2$ (since $P\mapsto
S$ replaces $A_j$ by the $S$-brick $\{q_i\}$ and two $Y$-bricks, one containing
$\min(A_j)$ and the other containing $\max(A_j)$); so $\{Y,S\}$ is bad again.

And in case~(b), obviously, $q_i\notin Y$. Then one of four subcases takes
place: (b1) $p_i<q_i<A_i$; (b2) $A_i<q_i<p_{i+1}$; (b3) $i=0$ and $q_0<A^m$;
and (b4) $i=r'$ and $A^M<q_{r'}$. In subcases~(b3) and~(b4), for $S:=Pq_i$, we
have $|\Iscr(Y,S)|=|\Iscr|+1$ (since $\{q_i\}$ becomes a new brick), whence
$\{Y,S\}$ is bad. In subcases~(b1) and~(b2), for $S':=(P-p_i)q_i$ and
$S':=(P-p_{i+1})q_i$, respectively, the transformation $P\mapsto S'$ replaces
the $P$-brick $\{p_i\}$ or $\{p_{i+1}\}$ by the $S'$-brick $\{q_i\}$, and the
badness of $\{Y,P\}$ implies that of $\{Y,S'\}$.
 \medskip

\noindent\underline{\emph{Case IV}}: $b\ge r'+1$. Assuming, as before, that we
are not in~$(\ast\ast)$ from Lemma~\ref{lm:refinedPQ}, there is $i$ such that
$\{q_i\}$ is not a $Y$-brick of $\Iscr$. Take $S:=Pq_i$. We can observe that in
all possible cases for $q_i$ (exposed in Case~III above), the transformation
$XP\mapsto XS$ leads to the following: $|XS|>|XP|$, each $Y$-brick of $\Iscr$
either preserves or is replaced by a (nonempty) $Y$-brick of
$\Iscr(Y,XS)=:\Iscr'$, and similarly for the $XP$-bricks of $\Iscr$. Then
$|\Iscr'|\ge|\Iscr|\ge 2r'+1$ and, moreover, in case $|\Iscr'|=2r'+1$, the
number of $XS$-bricks ($Y$-bricks) of $\Iscr'$ is the same as the one of
$XP$-bricks (resp. $Y$-bricks) of $\Iscr$, which is equal to $r'+1=b$ (resp.
$r'=a$). So $b>a$, $XP$ surrounds $Y$, and $|XP|>|Y|$ (since $\{Y,XP\}$ is
bad). Now the badness of $\{Y,XS\}$ follows from $|XS|>|XP|$, yielding a
contradiction.
\medskip

Thus, assertion~(i) in the theorem is proven.
\medskip

It remains to show~(ii). We reduce it to the previous case, using the following
observation. For $A\subseteq[n]$, let $\bar A$ denote the complementary set
$[n]-A$. One can see that $\Iscr(A,B)=\Iscr(\bar A,\bar B)$ and that the bricks
for $A-B$ coincide with those for $\bar B-\bar A$. It follows that if $A,B$ are
$(r+2)$-interlaced (where $r$ is odd, as before) and $A$ surrounds $B$, then
$\bar A,\bar B$ are $(r+2)$-interlaced, $\bar B$ surrounds $\bar A$, and
$|A|-|B|=|\bar B|-|\bar A|$. Therefore, if $A,B$ are weakly $r$-separated then
so are $\bar A,\bar B$.

Now for $Y,P,Q,X$ as above and $U:=XQ$, consider $Y':=\bar Y$,
\,$X':=\bar{XPQ}$ ($=[n]-(X\cup P\cup Q)$) and $U':=\bar{XQ}$. Suppose that
$\{Y,U\}$ is bad. Then $\{Y',U'\}$ is bad as well. Note also that $U'=X'P$. By
the theorem applied to $Y',P,Q,X'$ and $U'$, there exists $S'\in\Nscrup(P,Q)$
such that $\{Y',X'S'\}$ is bad. Then $\{Y,\bar{X'S'}\}$ is bad as well. Take
$S:=(P\cup Q)-S'$. One can see that $S\in\Nscrdown(P,Q)$ and $\bar{X'S'}=XS$.
Therefore, $\{Y,XS\}$ is bad, as required.

This completes the proof of Theorem~\ref{tm:PQYstr} (implying
Theorem~\ref{tm:gen_witn}).
 \end{proof}

Note that Theorem~\ref{tm:PQYstr} implies a sharper version of the theorem on
combinatorial flips.

  \begin{corollary} \label{cor:gen_witPQ}
For $r,n,P,Q,X$ as in Theorem~\ref{tm:gen_witn}, if a weakly $r$-separated
collection $\Wscr\subset 2^{[n]}$ contains the set $XP$ ($XQ$) and the sets
$XS$ for all $S\in\ \Nscr^\downarrow(P,Q)$ (resp. $S\in\ \Nscr^\uparrow(P,Q)$),
then the collection obtained from $\Wscr$ by replacing $XP$ by $XQ$ (resp. $XQ$
by $XP$) is weakly $r$-separated as well.
  \end{corollary}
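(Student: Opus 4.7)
The plan is to derive Corollary~\ref{cor:gen_witPQ} as an immediate contrapositive of Theorem~\ref{tm:PQYstr}. Since the two directions are symmetric, I would focus on the raising flip $XP \mapsto XQ$ (the lowering flip $XQ \mapsto XP$ being identical after swapping assertion~(i) for~(ii) and $\Nscr^{\uparrow}$ for $\Nscr^{\downarrow}$). Set $\Wscr' := (\Wscr - \{XP\}) \cup \{XQ\}$.

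First I would reduce to checking pairs that involve the newly inserted set $XQ$. Every pair of members of $\Wscr'$ not containing $XQ$ already lies in $\Wscr$ and is therefore weakly $r$-separated by hypothesis. Hence it suffices to verify that $\{Y, XQ\}$ is weakly $r$-separated for every $Y \in \Wscr - \{XP, XQ\}$.

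Next I would argue by contradiction. Suppose some such $Y$ makes the pair $\{Y, XQ\}$ bad. Since $Y$ is distinct from both $XP$ and $XQ$, Theorem~\ref{tm:PQYstr}(ii) applies and supplies a set $S \in \Nscr^{\downarrow}(P,Q)$ such that $\{Y, XS\}$ is bad. But by the witness hypothesis $XS \in \Wscr$, and $Y \in \Wscr$, contradicting that $\Wscr$ itself is weakly $r$-separated. This contradiction forces $\{Y, XQ\}$ to be weakly $r$-separated for every eligible $Y$, so $\Wscr'$ is weakly $r$-separated, as required.

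There is essentially no obstacle: Theorem~\ref{tm:PQYstr} was tailored to give exactly this contrapositive in one line. The only minor issues are bookkeeping. One is that we must exclude $Y = XP, XQ$ before invoking the theorem, and this is automatic since $Y$ is drawn from $\Wscr - \{XP, XQ\}$. The other is the case $Y = XS$ for some $S$ in the witness set itself, but this causes no trouble: either the theorem's conclusion is applied and yields another $S' \in \Nscr^{\downarrow}(P,Q)$ with $\{XS, XS'\}$ bad (again contradicting weak $r$-separation of $\Wscr$), or one observes directly, via the remark following Theorem~\ref{tm:gen_witn}, that any two distinct sets of the form $X \cup T$ with $T \in \Nscr(P,Q) \cup \{P, Q\}$ other than the pair $\{XP, XQ\}$ are automatically weakly $r$-separated.
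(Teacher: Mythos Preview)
Your argument is correct and is precisely the contrapositive derivation the paper intends: the paper states the corollary with no separate proof, relying on Theorem~\ref{tm:PQYstr} exactly as you do (compare the parenthetical derivation of Theorem~\ref{tm:gen_witn} from Theorem~\ref{tm:PQYstr} earlier in Section~\ref{sec:proof2}). Your final paragraph about the special case $Y=XS$ is harmless but unnecessary, since the main contradiction already covers it uniformly.
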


We finish this section with proving the above-mentioned lemma.

\noindent\textbf{Proof of Lemma~\ref{lm:refinedPQ}} ~Suppose that there are
simultaneously $p\in P$ and $q\in Q$ that are not refined. Form $S':=P-p$,
$S'':=Pq$ and $S:=(P-p)q$ (note that $S''$ and $S$ are in
$\Nscr^\uparrow=\Nscr^\uparrow(P,Q)$, whereas $S'$ is not). Let
$\Iscr:=\Iscr(Y,XP)$, $\Iscr':=\Iscr(Y,XS')$ and $\Iscr'':=\Iscr(Y,XS'')$. We
write $A_i$ ($B_i$) for $Y$-bricks (resp. $XP$-bricks) in $\Iscr$ and assume
that they follow in $\Iscr$ in the order $\ldots
<A_{i-1}<B_i<A_i<B_{i+1}\ldots$\;.

For $A,B,A',B'\subseteq[n]$, let us say that the ordered pairs $(A,B)$ and
$(A',B')$ \emph{have the same type} if $|\Iscr(A,B)|=|\Iscr(A',B')|$ and the
first interval of $\Iscr(A,B)$ concerns $A-B$ if and only if the first interval
of $\Iscr(A',B')$ concerns $A'-B'$ (implying that a similar property holds for
the last intervals of $\Iscr(A,B)$ and $\Iscr'(A',B')$).

We examine four possible cases for $p,q$.
 \medskip

\noindent\emph{Case 1}: ~$p$ lies in an interval $C$ of $\Iscr$.

(1a) ~Suppose that $C$ is an $XP$-brick $B_i$. Since $p$ is not refined,
$|B_i|\ge 2$. If $p\notin Y$, then the transformation $XP\mapsto XS'$ replaces
$B_i$ by a (nonempty) $XS'$-brick $B'_i$ in $\Iscr$ with $B'_i\subseteq B_i$, while the other intervals in $\Iscr$ and $\Iscr'$ coincide.
Therefore, $\Iscr'$ and $\Iscr$ have the same type.

And if $p\in Y$, then $\min(B_i)<p<\max(B_i)$, and $XP\mapsto XS'$ replaces
$B_i$ by three bricks, say, $B'<A'<B''$, where $A'$ is the single-element
$Y$-brick $\{p\}$, and $B',B''$ are $XS'$-bricks (with $\min(B')=\min(B_i)$ and
$\max(B'')=\max(B_i)$). Then $|\Iscr'|=|\Iscr|+2$.
\smallskip

(1b) ~Now suppose that $C$ is a $Y$-brick $A_i$. This is possible only if $p\in Y$ and $\min(A_i)<p<\max(A_i)$. Then
$p\in Y-XS'$, and $XP\mapsto XS'$ preserves $A_i$ (as well as the other
intervals of $\Iscr$), whence $\Iscr'=\Iscr$.
\medskip

\noindent\emph{Case 2}: ~$q$ lies in an interval $C$ of $\Iscr$.

(2a) Suppose that $C=B_i$. This is possible only if  $q\notin Y$ and $\min(B_i)<q<\max(B_i)$ (since $q\notin XP$). Then $XP\mapsto XS''$ preserves
$B_i$, yielding $\Iscr''=\Iscr$.
\smallskip

(2b) Now suppose that $C=A_i$. Since $q$ is not refined, $|A_i|\ge 2$. If $q\in
Y$, then $XP\mapsto XS''$ replaces $A_i$ by a (nonempty) $Y$-brick $A'_i$
with $A'_i\subseteq A_i$. Therefore, $\Iscr''$ and $\Iscr$ have the same
type. And if $q\notin Y$, then $XP\mapsto XS''$ replaces $A_i$ by three bricks
$A'<B'<A''$, where $B'$ is the $XS''$-brick $\{q\}$, and $A',A''$ are
$Y$-bricks (with $\min(A')=\min(A_i)$ and $\max(A'')=\max(A_i)$), whence $|\Iscr''|=|\Iscr|+2$.
 \medskip

\noindent\emph{Case 3}: ~$p$ belongs to no interval of $\Iscr$. Then $p\in Y$.

(3a) Suppose that $A_i<p<B_{i+1}$ or $B_i<p<A_i$ for some $i$. Then $p\in
Y-XS'$, and $XP\mapsto XS'$ extends  $A_i$ (making a $Y$-brick with the
beginning or end at $p$). Hence $\Iscr'$ and $\Iscr$ have the same type.
 \smallskip

(3b) Suppose that $p<C$, where $C$ is the first interval of $\Iscr$. If $C$ is
an $XP$-brick, then $XP\mapsto XS'$ produces a new $Y$-brick, namely, $\{p\}$,
and preserves the other intervals of $\Iscr$, whence $|\Iscr'|=|\Iscr|+1$. And
if $C$ is a $Y$-brick, then $XP\mapsto XS'$ extends $C$ (making a $Y$-brick
with the beginning $p$), whence $\Iscr'$ and $\Iscr$ have the same type.
 \smallskip

(3c) Similarly, if $p>D$, where $D$ is the last interval of $\Iscr$, then
either $|\Iscr'|=|\Iscr|+1$, or $\Iscr'$ and $\Iscr$ have the same type (when
$D$ is extended to a $Y$-brick with the end $p$).
  \medskip

\noindent\emph{Case 4}: ~$q$ belongs to no interval of $\Iscr$. Then $q\notin
Y$.

(4a) Suppose that $A_{i-1}<q<B_i$ or $B_i<q<A_i$ for some $i$. Then $XP\mapsto
XS''$ extends $B_i$ (making a $XS''$-brick with the beginning or end at $q$).
Hence $\Iscr'',\Iscr$ have the same type.
 \smallskip

(4b) Suppose that $q<C$, where $C$ is the first interval of $\Iscr$. If $C$ is
an $XP$-brick, then $XP\mapsto XS''$ extends $C$ (making an $XS''$-brick with
the beginning $q$), whence $\Iscr'',\Iscr$ have the same type. And if $C$ is a
$Y$-brick, then $XP\mapsto XS''$ preserves $C$ and produces a new $X..$-brick
$\{q\}$, whence $|\Iscr''|=|\Iscr|+1$.
  \smallskip

(4c) Similarly, if $q>D$, where $D$ is the last interval of $\Iscr$, then
either $\Iscr''$ and $\Iscr$ have the same type, or $|\Iscr''|=|\Iscr|+1$.
\medskip

Now we finish proving the lemma as follows. Observe that in the above cases, $|\Iscr'|\ge |\Iscr|$ is valid throughout, and if this holds with equality, then $\Iscr'$ and $\Iscr$ have the same type. For $\Iscr''$ and $\Iscr$, the behavior is similar. 

If $|\Iscr''|>|\Iscr|$ happens, then $Y$ and $XS''$ form a bad pair (since they
are $|\Iscr''|$-interlaced with $|\Iscr''|>r+2$ and taking into account that
$S''=Pq\in \Nscrup$). This contradicts the hypotheses of the lemma.

Now let $|\Iscr''|=|\Iscr|$ (and $\Iscr'',\Iscr$ have the same type). Then we consider the neighbor $S=(P-p)q\in \Nscrup$ and assert that $\{Y,XS\}$ is bad, thus coming to a contradiction again.

To show this, let $\tilde \Iscr:=\Iscr(Y,XS)$. Suppose that $q\in Y$. Forming $Y^-:=Y-q$, we have $Y^- -XP=Y-XS''$ and $XP-Y^-=XS''-Y$, implying that $\Iscr^-:=\Iscr(Y^-,XP)$ coincides with $\Iscr''$. Hence $\Iscr^-$ and $\Iscr$ have the same type. Moreover, under the correspondence of intervals in these corteges (exposed in~(2b)), each $Y^-$-brick of $\Iscr^-$ is included in the corresponding $Y$-brick of $\Iscr$, and each $XP$-brick of $\Iscr^-$ includes the corresponding $XP$-brick of $\Iscr$. In particular, $p$ is not refined w.r.t. $\Iscr^-$. So we can apply to $X,P,Y^-,p$ reasonings as in Cases~1 and~3 and conclude that under the transformation $XP\mapsto XS'$, the cortege $\Iscr^-$ turns into $\hat \Iscr:=\Iscr(Y^-,XS')$ such that either $|\hat\Iscr|>|\Iscr^-|$, or $\hat\Iscr$ and $\Iscr^-$ have the same type. But $Y=Y^-q$ and $S=S'q$ imply $\hat\Iscr=\tilde\Iscr$. Now the badness of $\{Y,XS\}$ is immediate when $|\hat\Iscr|>|\Iscr^-|$ ($=|\Iscr|$), and follows from the badness of $\{Y,XP\}$ when $|\hat\Iscr|=|\Iscr^-|$ (since $\hat\Iscr$ and $\Iscr$ have the same type and $|Y^-|-|XS'|=|Y|-|XS|=|Y|-|XP|$).

Finally, let $q\notin Y$. Then (in view of $|\Iscr''|=|\Iscr|$) we are in one of the following subcases: (2a) with $\min(B_i)<q<\max(B_i)$ for some $i$; or~(4a) with $A_{i-1}<q<B_i$ or $B_i<q<A_i$ for some $i$; or~(4b) with $q<B^m<A^m$; or~(4c) with $q>B^M>A^M$ (where $A^m$ and $A^M$ (resp. $B^m$ and $B^M$) are the first and last $Y$-bricks (resp. $XP$-bricks) in $\Iscr$, respectively). By explanations above, in all of these situations, $XP\mapsto XS''$ leads to increasing at most one of $X..$-bricks and preserving the other intervals of $\Iscr$. This implies that $p$ is not refined w.r.t. $\Iscr''$, and we can apply to $X,S'',Y,p$ reasonings as in Cases~1 and~3 and conclude that $XS''\mapsto XS$ turns $\Iscr''$ into $\tilde\Iscr$ so that either $|\tilde\Iscr|> |\Iscr''|$ ($=|\Iscr|$), or $\tilde \Iscr$ and $\Iscr''$ have the same type. Then the badness of $\{Y,XS\}$ follows.

This completes the proof of the lemma.
 \hfill\qed

%----------------------- Sec. 6

\section{Weakly $r$-separated collections generated by cubillages}  \label{sec:weaksep-cubil}

In Sects.~\SEC{prelim},\,\SEC{smembr} we outlined an interrelation between
(strongly) $\ast$-separated collections on one hand, and cubillages and
s-membranes on other hand (see~\refeq{cub-separ} and~\refeq{size_membr}). This
section is devoted to geometric aspects of the weak $r$-separation, assuming
that $r$ is odd. Being motivated by geometric constructions for maximal weakly
1-separated collections elaborated in~\cite{DKK1,DKK2}, we explain how to
construct maximal by size weakly $r$-separated collections by use of the
so-called \emph{w-membranes}; these are analogs of s-membranes in certain
\emph{fragmentations} of cubillages.

In subsections below we introduce the notions of fragmentation and w-membrane,
demonstrate their properties (extending results from~\cite[Sect.~6]{DKK2}) and
finish with a theorem saying that the vertex set of any $(r+1)$-dimensional
w-membrane gives rise to a maximal by size weakly $r$-separated collection (for
corresponding $n$). Note that in Sects.~\SSEC{fragment}--\SSEC{acycl} the
dimension $d$ of a zonotope/cubillage in question is assumed to be arbitrary
(not necessarily odd).

%--------------------------Ssec.6.1

 \subsection{Fragmentation} \label{ssec:fragment}
~Let $Q$ be a cubillage on $Z(n,d)$. For $\ell=0,1,\ldots,n$, we denote the
``horizontal'' hyperplane at ``height'' $\ell$ in $\Rset^d$ by $H_\ell$, i.e.,
$H_\ell:=\{x=(x_1,\ldots,x_d)\in \Rset^d\,\colon\, x_1=\ell\}$. The
\emph{fragmentation} of $Q$ is meant to be the complex $\Qfrag$ obtained by
cutting $Q$ by $H_1,\ldots,H_{n-1}$.

Such hyperplanes subdivide each cube $C=(X\,|\,T)$ of $Q$ into $d$ pieces
$\Cfrag_1,\ldots,\Cfrag_d$, where $\Cfrag_h$ is the (closed) portion of $C$
between $H_{|X|+h-1}$ and $H_{|X|+h}$. We say that $\Cfrag_h$ is $h$-th
\emph{fragment} of $C$ and, depending on the context, may also think of
$\Qfrag$ as the set of fragments over all cubes. Let $S_h(C)$ denote $h$-th
horizontal \emph{section} $C\cap H_{|X|+h}$ of $C$ (where $0\le h\le d$); this
is the convex hull of the set of vertices
  \begin{equation} \label{eq:hor_sect}
  (X\,|\,\tbinom{T}{h}) \qquad (=\{X\cup A\,\colon\, A\subset T,\; |A|=h\}).
  \end{equation}
(Using terminology of~\cite{MS}, $S_h(C)$ is said to be a \emph{hyper-simplex}.
It turns into a usual simplex when $h=1$ or $d-1$.) Observe that for
$h=1,\ldots,d$,
  \begin{numitem1} \label{eq:fragm_cube}
$h$-th fragment $\Cfrag_h$ of $C$ is the convex hull of the set of vertices
$(X\,|\,\binom{T}{h-1})$ and $(X\,|\,\binom{T}{h}))$; it has two ''horizontal''
facets, namely, $S_{h-1}(C)$ and $S_h(C)$, and $2d$ other facets (conditionally
called ``vertical'' ones), namely, the portions of $F_i(C)$ and $G_i(C)$
between $H_{|X|+h-1}$ and $H_{|X|+h}$ for $i=1,\ldots,d$, denoted as
$F_{h,i}(C)$ and $G_{h,i}(C)$, respectively.
  \end{numitem1}
Here $F_i(C)$ and $G_i(C)$ are the facets of $C=(X|T)$ defined
in~\refeq{cube_facets}, letting $T=(p(1)<p(2)<\cdots<p(d))$. We call
$S_{h-1}(C)$ and $S_h(C)$ the \emph{lower} and \emph{upper} facets of the
fragment $\Cfrag_h$, respectively. Note that $S_0(C)$ and $S_d(C)$ degenerate
to the single points $X$ and $XT$, respectively. The vertical facets
$F_{d,i}(C)$ and $G_{1,i}(C)$ (for all $i$) degenerate as well.

The horizontal facets are ``not fully seen'' under the projection $\pi$. To
visualize all facets of fragments of $\Qfrag$, it is convenient to look at them
as though ``from the front and slightly from below'', i.e., by use of the
projection $\pi^\eps: \Rset^d\to\Rset^{d-1}$ defined by
   \begin{equation} \label{eq:pi_eps}
 x=(x_1,\ldots,x_d)\mapsto (x_1-\eps x_d,x_2,\ldots,x_{d-1})=: \pi^\eps(x)
   \end{equation}
for a sufficiently small $\eps>0$. (Compare $\pi^\eps$ with $\pi$.)
Figure~\ref{fig:pi_eps} illustrates the case $d=3$; here the fragments of a
cube $C=(X\,|\,T)$ with $T=(i<j<k)$ are drawn.

\begin{figure}[htb]
  \vspace{0.2cm}
\begin{center}
\includegraphics[scale=1]{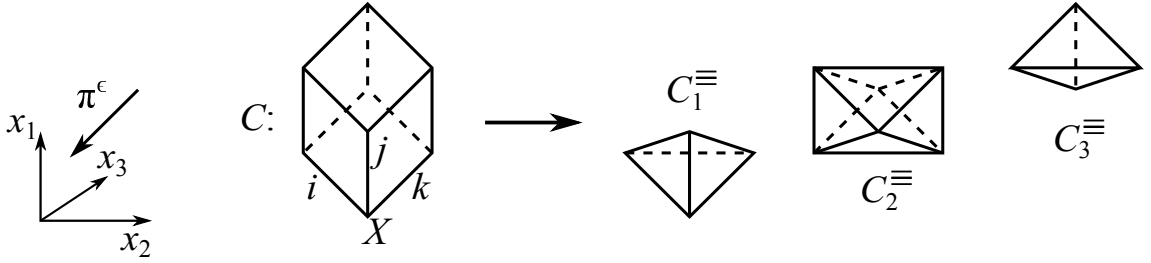}
\end{center}
\vspace{-0.3cm}
 \caption{the fragmentation of cube $C=(X\,|\,T)$}
 \label{fig:pi_eps}
  \end{figure}

Using this projection, we obtain slightly slanted front and rear sides of
objects in $\Qfrag$. More precisely, for a closed set $U$ of points in
$Z=Z(n,d)$, let $U^\epsfr$ ($U^\epsrear$) be the subset of $U$ ``seen'' in the
direction $e_d+\eps e_1$ (resp. $-e_d-\eps e_1$), where $e_i$ is $i$-th
coordinate vector, i.e., formed by the points $x\in(\pi^\eps)^{-1}(x')\cap U$
with $x_d$ minimum (resp. maximum) for all $x'\in\pi^\eps(U)$. We call it the
$\eps$-\emph{front} (resp. $\eps$-\emph{rear}) \emph{side} of $U$.

Obviously, $Z^\epsfr=\Zfr$ and $Z^\epsrear=\Zrear$. Also for a cube $C=(X|T)$
in $Z$, $C^\epsfr=\Cfr$ and $C^\epsrear=\Crear$. As to fragments of $C$, their
$\eps$-front and $\eps$-rear sides are viewed as follows:
\begin{numitem1} \label{eq:sides_fragm}
for $h=1,\ldots,d$, ~$C^\epsfr_h$ is the union of $\Cfr_h$ and the lower facet
$S_{h-1}(C)$ (degenerating to the point $X$ when $h=1$); in its turn,
$C^\epsrear_h$ is formed by the union of $\Crear_h$ and the upper facet
$S_{h}(C)$ (degenerating to the point $XT$ when $h=d$).
  \end{numitem1}

So $C^\epsfr_h\cup C^\epsrear_h$ is just the boundary of $\Cfrag_h$.

%----------------------- SSec. 6.2

 \subsection{W-membranes}  \label{ssec:wmembr}
~Membranes of this sort represent certain $(d-1)$-dimensional subcomplexes of
$\Qfrag$. To introduce them, we consider small deformations of cyclic zonotopes
in $\Rset^{d-1}$ using the projection $\pi^\eps$. More precisely, given a
cyclic configuration $\Xi=(\xi_1,\ldots,\xi_n)$ as in~\refeq{cyc_conf}, define
   $$
   \psi_i:=\pi(\xi_i) \quad\mbox{and}\quad \psi^\eps_i:=\pi^\eps(\xi_i),\quad
   i=1,\ldots, n.
   $$
Then $\Psi=(\psi_1,\ldots,\psi_n)$ obeys~\refeq{cyc_conf} (with $d-1$ instead
of $d$), and when $\eps$ is small enough, $\Psi^\eps=(\psi^\eps_1,\ldots,
\psi^\eps_n)$ obeys the condition~\refeq{cyc_conf}(b), though slightly
violates~\refeq{cyc_conf}(a); yet we keep the term ``cyclic configuration'' for
$\Psi^\eps$ as well. Consider the zonotope in $\Rset^{d-1}$ generated by
$\Psi^\eps$, denoted as $Z^\eps(n,d-1)$ (when it is not confusing).
  \Xcomment{
Assuming, w.l.o.g., that all $\{0,1\}$-combinations of vectors $\psi^\eps_i$
are different, the subsets $X$ of $[n]$ are bijective to the points $v$ of the
form $\sum_{i\in X} \psi^\eps_i$ in $Z^\eps(n,d-1)$; we identify such $v$ and
$X$. (When $\eps$ is small, this gives a natural isomorphism between the
cubillages in $Z^\eps(n,d-1)$ and $Z(n,d-1)$.)
   }
 \smallskip

\noindent \textbf{Definition.} A \emph{w-membrane} of a cubillage $Q$ on
$Z(n,d)$ is a subcomplex $M$ of the fragmentation $\Qfrag$ such that $M$
(regarded as a subset of $\Rset^d$) is bijectively projected by $\pi^\eps$ to
$Z^\eps(n,d-1)$.
  \smallskip

A w-membrane $M$ has facets (of dimension $d-1$) of two sorts, called
\emph{H-tiles} and \emph{V-tiles}. Each H-tile is a horizontal facet of some
fragment (viz. the section $S_h(C)$ of a cube $C$ in $Q$ at height
$h\in[d-1]$). And V-tiles are vertical facets of some fragments $\Cfrag_h$
(see~\refeq{fragm_cube}).

%----------------------- Ssec. 6.3

 \subsection{Acyclicity and the lattice structure of w-membranes}  \label{ssec:acycl}
~Let $\bfC(n,d)$ denote the set of all cubes in $Z(n,d)$ (occurring in all
cubillages there). For $C,C'\in\bfC(n,d)$, we say that $C$ \emph{immediately
precedes} $C'$ if $\Crear$ and $(C')^{\rm fr}$ have a common facet. As a far
generalization of the known acyclicity property for cubes in a cubillage, one
can show the following
  \begin{prop} \label{pr:acycl_all}
The directed graph $\Gamma_{n,d}$ whose vertices are the cubes in $\bfC(n,d)$
and whose edges are the pairs $(C,C')$ of cubes such that $C$ immediately
precedes $C'$ is acyclic.
  \end{prop}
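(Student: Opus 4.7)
My first attempt is to exhibit a potential $\phi : \bfC(n,d) \to \Rset$ that strictly increases along edges of $\Gamma_{n,d}$; acyclicity is then automatic. The natural candidate is the barycenter height $\phi(C) := b(C) \cdot e_d$. Given an edge $C \to C'$ sharing facet $F$ with $F \subset C^{\rm rear}$ and $F \subset (C')^{\rm fr}$, set $H := \mathrm{aff}(F)$ and let $u$ be the unit normal to $H$ with $u \cdot e_d > 0$. Since $F \subset C^{\rm rear}$ forces the outward normal of $C$ at $F$ to point upward and hence to equal $u$, the cube $C$ lies in the half-space $\{x : (x - b(F)) \cdot u \le 0\}$; symmetrically, $F \subset (C')^{\rm fr}$ puts $C'$ in the opposite half-space. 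Hence $(b(C') - b(C)) \cdot u > 0$, giving positive progress in the $u$-direction for each edge.

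Unfortunately this local $u$-positivity does not translate to positivity in the $e_d$-direction globally. For instance, with $d = 3$ and the Veronese configuration $\xi_i = (1, t_i, t_i^2)$ (any $0 < t_1 < \cdots < t_n$), the cubes $C = (Y \mid \{2,3,4\})$ and $C' = (Y \mid \{1,2,4\})$ share $F = (Y \mid \{2,4\})$ with $F \subset C^{\rm rear} \cap (C')^{\rm fr}$, yet $(b(C') - b(C)) \cdot e_d = (t_1^2 - t_3^2)/2 < 0$. So the naive barycenter height is not an acyclicity witness, and one must work harder.

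The approach I would pursue instead is combinatorial, via the higher Bruhat framework. First, any edge $C \to C'$ of $\Gamma_{n,d}$ is realized inside some cubillage $Q$ on $Z(n,d)$: the union $C \cup C'$ is convex (two cubes glued along $F$), and its relative boundary in $Z(n,d)$ extends to a cubillage by the extension property of s-membranes (cf. Sect.~\SEC{smembr}). Within any such $Q$, the cubes carry an acyclic sweep partial order in the $e_d$-direction, and $C \to C'$ is one of its covering relations (this is the known acyclicity property for cubes in a single cubillage, mentioned just before the proposition statement). Finally, by the correspondence (after Manin--Schechtman) between cubillages on $Z(n,d)$ and linear extensions of the higher Bruhat poset $B(n, d-1)$, the per-cubillage sweep orders assemble into a single partial order $\preceq$ on $\bfC(n,d)$, from which acyclicity of $\Gamma_{n,d}$ is immediate.

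The main obstacle is this last assembly step: different cubillages induce different sweep orders on their cubes, and checking that covering relations drawn from all cubillages are mutually consistent requires tracking how single flips between neighboring cubillages act on the sweep order. I expect this to be where the bulk of the technical work lies, and it is plausibly why the proof is deferred to Appendix~A.
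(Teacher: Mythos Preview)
Your proposal is not a proof: after correctly discarding the barycenter potential (the counterexample is valid), you outline an approach via higher Bruhat orders and explicitly leave the decisive ``assembly step'' open. That step is not a technicality --- it is essentially the proposition itself. Knowing that each individual edge $C\to C'$ lives in some cubillage and that each cubillage carries its own acyclic sweep order tells you nothing about a cycle whose edges are drawn from different cubillages; ruling out such a cycle is precisely the content of the statement. Your invocation of the Manin--Schechtman correspondence does not help here: the higher Bruhat poset is a poset of cubillages (equivalently, of admissible orders or inversion sets), not of cubes in $\bfC(n,d)$, so there is no ready-made ``single partial order on $\bfC(n,d)$'' to pull back. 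Also, the preliminary claim that any two cubes sharing a front/rear facet can be simultaneously embedded in a cubillage is plausible but not the ``extension property of s-membranes'' as stated in Sect.~\SEC{smembr}; it would itself require an argument.

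The paper's proof (Appendix~A) takes a completely different and much more elementary route: induction on $n$ via a three-valued label. For a cube $C''=(X''\,|\,T'')$ set $\omega(C'')=0,1,2$ according as $n\notin X''\cup T''$, $n\in T''$, or $n\in X''$. A short case analysis using~\refeq{Cfr_facets} shows that $\omega$ is monotone non-decreasing along every edge of $\Gamma_{n,d}$. Hence on a hypothetical directed cycle $\omega$ is constant, say equal to $q$. If $q=0$ the cycle already lives in $\Gamma_{n-1,d}$; if $q=2$ one deletes $n$ from all roots to get a cycle in $\Gamma_{n-1,d}$; if $q=1$ one deletes $n$ from all types, which reverses front and rear and yields a cycle in $\Gamma_{n-1,d-1}$. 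In each case the inductive hypothesis gives a contradiction. No cubillage is ever fixed, and no global order on $\bfC(n,d)$ is constructed in advance; the argument is purely local and recursive.
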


(As a consequence, the transitive closure of this ``immediately preceding''
relation forms a partial order on $\bfC(n,d)$.) This proposition enables us to
construct a partial order on the set of fragments for a cubillage $Q$, which in
turn is used to show that the set of w-membranes in $\Qfrag$ forms a
distributive lattice.

More precisely, given a cubillage $Q$ on $Z(n,d)$, consider fragments
$\Delta=\Cfrag_i$ and $\Delta'=(C')^\equiv_j$ of $\Qfrag$. Let us say that
$\Delta$ \emph{immediately precedes} $\Delta'$ if the $\eps$-rear side of
$\Delta$ and the $\eps$-front side of $\Delta'$ share a facet. In other words,
either $C\ne C'$ and $\Delta^{\rm rear}\cap(\Delta')^{\rm fr}$ is a V-tile, or
$C=C'$ and $j=i+1$. The following is important for us.
  \begin{prop} \label{pr:acyc_Qfrag}
The directed graph $\Gamma_{\Qfrag}$ whose vertices are the fragments in
$\Qfrag$ and whose edges are the pairs $(\Delta,\Delta')$ of fragments such
that $\Delta$ immediately precedes $\Delta'$ is acyclic.
  \end{prop}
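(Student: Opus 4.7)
The plan is to reduce the acyclicity of $\Gamma_{\Qfrag}$ to that of $\Gamma_{n,d}$ (Proposition~\ref{pr:acycl_all}) by exploiting a height stratification of the fragments. For a fragment $\Delta=\Cfrag_h$ of a cube $C=(X\,|\,T)\in Q$, set
\[
  \hor(\Delta):=|X|+h,
\]
the common $x_1$-coordinate of the upper horizontal facet $S_h(C)$ of $\Delta$. I will show that $\hor$ is nondecreasing along every edge of $\Gamma_{\Qfrag}$, strictly increasing on intra-cube edges and constant on inter-cube ones, and that every inter-cube edge projects to an edge of $\Gamma_{n,d}$. Once this is established, a directed cycle must have zero total height change, hence consist entirely of inter-cube edges at a fixed height; projecting it down to the underlying cubes will produce a directed cycle in $\Gamma_{n,d}$, contradicting Proposition~\ref{pr:acycl_all}.

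More concretely, for an edge $(\Delta,\Delta')$ with $\Delta=\Cfrag_i$ and $\Delta'=(C')^\equiv_j$ I would distinguish two cases. If $C=C'$, then by definition $j=i+1$ and the shared tile is the H-tile $S_i(C)$, so $\hor(\Delta')=\hor(\Delta)+1$. If $C\ne C'$, the shared tile is necessarily a V-tile, which by~\refeq{sides_fragm} lies simultaneously in $\Crear_i\subset\Crear$ and in $(C')^{\rm fr}_j\subset (C')^{\rm fr}$. Hence it is a $(d-1)$-dimensional piece of a common facet $F$ of $C$ and $C'$ contained in $\Crear\cap(C')^{\rm fr}$, and consequently $(C,C')$ is an edge of $\Gamma_{n,d}$. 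The facet $F$ is vertical: it is a $(d-1)$-zonotope that the hyperplanes $H_\ell$ slice into unit-height V-tiles, exactly one per horizontal band. For the shared region to remain $(d-1)$-dimensional, the slab indexing $\Cfrag_i$ must coincide with the slab indexing $(C')^\equiv_j$, which gives $|X|+i=|X'|+j$, i.e.\ $\hor(\Delta)=\hor(\Delta')$.

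Granting the preceding, any directed cycle $\Delta_1\to\cdots\to\Delta_m\to\Delta_1$ in $\Gamma_{\Qfrag}$ consists of inter-cube edges at a common height $\ell$. At such a fixed height, each cube $C=(X\,|\,T)$ contributes at most one fragment, namely $\Cfrag_{\ell-|X|}$ (when $\ell-|X|\in\{1,\dots,d\}$); thus, after passing to a simple subcycle, the underlying cubes $C_1,\dots,C_m$ are pairwise distinct. Combined with the fact that each inter-cube edge of $\Gamma_{\Qfrag}$ descends to an edge of $\Gamma_{n,d}$, this yields a simple directed cycle $C_1\to\cdots\to C_m\to C_1$ in $\Gamma_{n,d}$, which is forbidden by Proposition~\ref{pr:acycl_all}.

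The main obstacle is the dimension-counting part of the second step: one must rigorously argue that a V-tile shared by fragments of distinct cubes necessarily lives on a single common vertical facet of the cubillage and within a single common horizontal slab. This rests on combining the decomposition~\refeq{sides_fragm} of the $\eps$-front and $\eps$-rear sides with the list~\refeq{fragm_cube} of vertical facets $F_{h,i}(C)$ and $G_{h,i}(C)$, and ruling out $(d-1)$-dimensional overlaps between horizontal facets of distinct cubes or between V-tiles that lie in distinct horizontal bands.
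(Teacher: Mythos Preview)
Your proof is correct and follows essentially the same route as the paper's: define a height function on fragments (you use $|X|+h$, the paper uses $|X|+h-\tfrac12$), observe that it is constant across a shared V-tile and strictly increases across a shared H-tile, conclude that any directed cycle lies at a single height and hence consists entirely of V-tile edges, and then project to a directed cycle of cubes in $\Gamma_{n,d}$, contradicting Proposition~\ref{pr:acycl_all}. The ``main obstacle'' you flag is not really one: by the paper's explicit definition of the immediate-precedence relation on fragments, the inter-cube case \emph{is} the V-tile case, and a V-tile is by construction the slice of a single facet of $Q$ between two consecutive hyperplanes $H_{\ell-1},H_\ell$, which forces the slab indices to agree.
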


Proofs of Propositions~\ref{pr:acycl_all} and~\ref{pr:acyc_Qfrag} will be given
in Appendix~A.

From Proposition~\ref{pr:acyc_Qfrag} it follows that the transitive closure of
the above relation on the fragments of $\Qfrag$ forms a partial order; denote
it as $(\Qfrag,\prec)$.

Next we associate with a w-membrane $M$ of $Q$ the set $\Qfrag(M)$ of fragments
in $\Qfrag$ lying in the region of $Z(n,d)$ between $\Zfr$ and $M$. The
constructions of $\pi^\eps$ and $M$ lead to the following property: for
fragments $\Delta,\Delta'$ of $\Qfrag$, if $\Delta$ immediately precedes
$\Delta'$ and if $\Delta'\in \Qfrag(M)$, then $\Delta\in\Qfrag(M)$ as well.
This implies a similar property for fragments $\Delta,\Delta'$ with
$\Delta\prec\Delta'$. So $\Qfrag(M)$ is an ideal of $(\Qfrag,\prec)$. One can
check that a converse property is also true: any ideal of $(\Qfrag,\prec)$ is
expressed as $\Qfrag(M)$ for some w-membrane $M$ of $Q$. Therefore,
   \begin{numitem1} \label{eq:w-lattice}
the set $\Mscrw(Q)$ of w-membranes of a cubillage $Q$ on $Z=Z(n,d)$ is a
distributive lattice in which for $M,M'\in\Mscrw(Q)$, the w-membranes $M\wedge
M'$ and $M\vee M'$ satisfy $\Qfrag(M\wedge M')=\Qfrag(M)\cap \Qfrag(M')$ and
$\Qfrag(M\vee M')=\Qfrag(M)\cup \Qfrag(M')$; the minimal and maximal elements
of this lattice are $\Zfr$ and $\Zrear$, respectively.
  \end{numitem1}

Suppose that $M\in\Mscrw(Q)$ is different from $\Zfr$; then
$\Qfrag(M)\ne\emptyset$. Take a maximal (relative to the order $\prec$ in
$\Qfrag$) fragment $\Delta$ in $\Qfrag(M)$. Then $\Delta^\epsrear$ is entirely
contained in $M$. Indeed, if a facet $F\in\Delta^\epsrear$ lies in $\Zrear$,
then $F$ is automatically in $M$. And if $F$ is not in $\Zrear$, then $F$ is
shared by $\Delta^\epsrear$ and $(\Delta')^\epsfr$ for another fragment
$\Delta'$. Hence $\Delta$ immediately precedes $\Delta'$, implying that
$\Delta'$ lies in the region between $M$ and $\Zrear$. Then $F$ is in $M$, as
required.

For $\Delta$ as above, the set $\Qfrag(M)-\{\Delta\}$ is again an ideal of
$(\Qfrag,\prec)$, and therefore it is expressed as $\Qfrag(M')$ for a
w-membrane $M'$. Moreover, $M'$ is obtained from $M$ by replacing the disk
$\Delta^\epsrear$ by $\Delta^\epsfr$. We call the transformation $M\mapsto M'$
the \emph{lowering flip} in $M$ \emph{using} $\Delta$, and call the reverse
transformation $M'\mapsto M$ the \emph{raising flip} in $M'$ \emph{using}
$\Delta$. As a result, we obtain the following nice property.
  \begin{corollary} \label{cor:seq_membranes}
Let $M$ be a w-membrane of a cubillage $Q$. Then there exists a sequence of
w-membranes $M_0,M_1,\ldots,M_k\in \Mscrw(Q)$ such that $M_0=\Zfr$, $M_k=M$,
and for $i=1,\ldots,k$, $M_i$ is obtained from $M_{i-1}$ by the raising flip
using some fragment in $\Qfrag$.
  \end{corollary}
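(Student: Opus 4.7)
My plan is to prove the corollary by induction on the cardinality of $\Qfrag(M)$, the ideal of $(\Qfrag,\prec)$ associated with the w-membrane $M$. The base case $M=\Zfr$ is immediate: $\Qfrag(M)=\emptyset$, so the trivial sequence with $k=0$ works.

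For the inductive step I would assume $M\ne\Zfr$, so $\Qfrag(M)\ne\emptyset$. By Proposition~\ref{pr:acyc_Qfrag}, $(\Qfrag,\prec)$ is a finite acyclic poset, so its finite nonempty subset $\Qfrag(M)$ has a $\prec$-maximal element $\Delta$. The paragraph immediately preceding the corollary has already established what we need about such a $\Delta$: first, $\Delta^\epsrear$ is entirely contained in $M$ (otherwise some $\Delta'$ with $\Delta\prec\Delta'$ would also lie in $\Qfrag(M)$, contradicting maximality); second, $\Qfrag(M)-\{\Delta\}$ is again an ideal of $(\Qfrag,\prec)$, hence has the form $\Qfrag(M')$ for a w-membrane $M'\in\Mscrw(Q)$; third, $M'$ is obtained from $M$ by the lowering flip using $\Delta$, which by definition means that $M$ is obtained from $M'$ by the raising flip using $\Delta$.

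Since $|\Qfrag(M')|=|\Qfrag(M)|-1$, the inductive hypothesis applied to $M'$ produces a sequence $\Zfr=M_0,M_1,\ldots,M_{k-1}=M'$ in $\Mscrw(Q)$ in which each $M_i$ is obtained from $M_{i-1}$ by a raising flip. Appending $M_k:=M$, obtained from $M_{k-1}=M'$ by the raising flip using $\Delta$, yields the required sequence of length $k$, completing the induction.

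I do not expect any real obstacle, since the substantive work is done by the preceding material: acyclicity (Proposition~\ref{pr:acyc_Qfrag}) provides the maximal element at each stage, and the ideal/w-membrane correspondence expressed in~\refeq{w-lattice} together with the explicit description of the lowering and raising flips as singleton removals/additions of ideals lets us take one step at a time. The corollary is then the standard unfolding of a finite distributive lattice from its minimum to a given element via the join-irreducibles, specialized to $(\Mscrw(Q),\wedge,\vee)$.
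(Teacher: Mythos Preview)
Your proposal is correct and follows exactly the approach implicit in the paper: the paper does not give a separate proof of the corollary, treating it as an immediate consequence of the preceding paragraph, and your induction on $|\Qfrag(M)|$ is precisely the natural way to spell out that paragraph into a full argument.
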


%----------------------- SSec. 6.4

 \subsection{Weakly $r$-separated collections via w-membranes}  \label{ssec:wsepar-wmembr}
~Now we throughout assume that $r$ is odd and $d=r+2$. Consider a cubillage $Q$
on $Z=Z(n,d)$. Based on Theorem~\ref{tm:gen_witn} and
Corollary~\ref{cor:seq_membranes}, we establish the main result of
Sect.~\SEC{weaksep-cubil}.

 \begin{theorem} \label{tm:membr-wsepar}
For any w-membrane $M$ of a cubillage $Q$ on $Z(n,d)$, the set $V(M)$ of
vertices of $M$ (regarded as subsets of $[n]$) constitutes a maximal by size
weakly $r$-separated collection in $2^{[n]}$ (where, as before, $r$ is odd and
$d=r+2$). In particular, all w-membranes in $Q$ have the same number of
vertices, namely, $w_{n,d-2}$ ($=s_{n,d-2}$).
  \end{theorem}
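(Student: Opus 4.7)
The plan is to argue by induction along the sequence of raising flips provided by Corollary~\ref{cor:seq_membranes}. Fix w-membranes $M_0=\Zfr,\,M_1,\ldots,M_k=M$ in $\Mscrw(Q)$ with each $M_i$ obtained from $M_{i-1}$ by a raising flip using some fragment of $\Qfrag$, and prove by induction on $i$ that $V(M_i)$ is weakly $r$-separated with $|V(M_i)|=s_{n,d-2}$. The base case is immediate: $\Zfr$ is an s-membrane, so by~\refeq{size_membr} the set $V(\Zfr)$ is $(d-2)$-separated with cardinality $s_{n,d-2}$, and strong $r$-separation trivially implies weak $r$-separation since $r=d-2$.

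For the inductive step, suppose $M_i$ is obtained from $M_{i-1}$ by a raising flip using a fragment $\Delta=\Cfrag_h$ of a cube $C=(X\,|\,T)$ with $T=(p(1)<\cdots<p(d))$; by the construction in Sect.~\SSEC{acycl} this replaces the disk $\Delta^{\epsfr}$ of $M_{i-1}$ by $\Delta^{\epsrear}$, so the only vertices that can appear or disappear are those lying uniquely on one $\eps$-side of $\Delta$. A lower-section vertex $X\cup A$ with $A\subseteq T$, $|A|=h-1$, lies automatically on $\Delta^{\epsfr}$ and lies on $\Delta^{\epsrear}$ iff it belongs to $\Crear$; using the facet description of $\Crear$ recalled in the derivation of~\refeq{unique_inner} (namely $F_i\subset\Crear$ iff $d-i$ is odd and $G_i\subset\Crear$ iff $d-i$ is even), a short check shows that $X\cup A\notin\Crear$ forces $A=\{p(2),p(4),\ldots,p(d-1)\}$, which has size $r'=(d-1)/2$ and hence requires $h=r'+1$. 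A symmetric analysis holds for upper-section vertices. Therefore, if $h\ne r'+1$ the flip does not change the vertex set at all, while for $h=r'+1$ exactly one vertex is removed, namely $t_C=X\cup P$ with $P:=\{p(2),p(4),\ldots,p(d-1)\}$, and exactly one vertex is added, namely $h_C=X\cup Q$ with $Q:=\{p(1),p(3),\ldots,p(d)\}$; cf.~\refeq{unique_inner} and~\refeq{gencube_inner}.

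In the nontrivial case $h=r'+1$, the elements of $P$ and $Q$ interlace as $q_0<p_1<q_1<\cdots<p_{r'}<q_{r'}$, so the data $(P,Q,X)$ fits the hypotheses of Theorem~\ref{tm:gen_witn}. The required witnesses $\{X\cup S\,\colon\,S\in\Nscr(P,Q)\}$ are precisely the vertices of the middle fragment $\Cfrag_{r'+1}$ at heights $|X|+r'$ and $|X|+r'+1$ other than $t_C$ and $h_C$, i.e., exactly the vertices lying on the shared rim $\Delta^{\epsfr}\cap\Delta^{\epsrear}$. Since this rim belongs to both $M_{i-1}$ and $M_i$, all required witnesses already lie in $V(M_{i-1})$, and Theorem~\ref{tm:gen_witn} yields that $V(M_i)=(V(M_{i-1})-\{XP\})\cup\{XQ\}$ is weakly $r$-separated. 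In either case $|V(M_i)|=|V(M_{i-1})|$.

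Combining the two cases, the induction gives that $V(M)$ is weakly $r$-separated with $|V(M)|=s_{n,d-2}$; by Theorem~\ref{tm:w=s} (applied with the odd integer $r=d-2$) this equals $w_{n,r}$, so $|V(M)|$ attains the maximum size of a weakly $r$-separated collection in $2^{[n]}$, whence $V(M)$ is maximal by size. The main technical obstacle is the case analysis identifying which vertices of a general fragment are uniquely on one $\eps$-side --- this is where the oddness of $d$ and the symmetry between $\Cfr$ and $\Crear$ enter decisively --- and recognizing that the induced single-vertex swap is precisely the raising combinatorial flip to which Theorem~\ref{tm:gen_witn} is tailored; once this matching is in place, the inductive framework assembles smoothly from Corollary~\ref{cor:seq_membranes}.
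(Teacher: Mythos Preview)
Your proof is correct and follows essentially the same approach as the paper's: induct along the flip sequence from Corollary~\ref{cor:seq_membranes}, show that flips through fragments with $h\ne r'+1$ leave the vertex set unchanged while the flip with $h=r'+1$ swaps $t_C$ for $h_C$, and then invoke Theorem~\ref{tm:gen_witn} using the remaining vertices of $\Delta$ as the required witnesses. The only cosmetic difference is that the paper organizes the case analysis by the value of $h$ (treating $h\le r'$, $h\ge r'+2$, and $h=r'+1$ separately), whereas you argue directly from ``which vertices can lie on only one $\eps$-side of $\Delta$''; the content is the same.
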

  \begin{proof}
Let $M\in\Mscrw(Q)$ and consider a sequence $\Zfr=M_0,M_1,\ldots,M_k=M$ as in
Corollary~\ref{cor:seq_membranes}. Let $\Delta_1,\ldots,\Delta_k$ be the
fragments of $Q$ such that $M_i$ is obtained from $M_{i-1}$ by the raising flip
using $\Delta_i$. The collection $V(\Zfr)$ is weakly $r$-separated (as it is
strongly $r$-separated, cf.~\refeq{size_membr}), and our aim is to show that if
$V(M_{i-1})$ is weakly $r$-separated, then so is $V(M_i)$.

To show this, consider w-membranes $M,M'$ of $Q$ such that $M'$ is obtained
from $M$ by the raising flip using a fragment $\Delta\in\Qfrag$. Let
$\Delta=\Cfrag_h$ for a cube $C=(X\,|\,T=(p(1)<\ldots<p(d)))$ and $h\in[d]$. By
explanations in Sect.~\SEC{smembr}, $\Cfr$ and $\Crear$ differ by exactly two
vertices; namely, $V(\Cfr)=V(\Crim)\cup\{t_C\}$ and
$V(\Crear)=V(\Crim)\cup\{h_C\}$, where $t_C=Xp(2)p(4)\ldots p({d-1})$ and
$h_C=Xp(1)p(3)\ldots p(d)$ (cf.~\refeq{gencube_inner}). Define $R$ to be the
set of vertices of $\Crim$ occurring in $\Delta$, and let $r':=(d-1)/2$. We
consider three cases.
  \smallskip

\noindent\emph{Case 1}: ~$h\le r'$. Since the vertices of $\Delta$ are formed
by the sections $S_{h-1}(C)$ and $S_h(C)$,
   $$
  V(\Delta)=(X\,|\,\tbinom{T}{h-1})\cup(X\,|\,\tbinom{T}{h}) \quad\mbox{and}
  \quad R\subseteq V(\Deltafr)\cup V(\Deltarear)
  $$
(cf.~\refeq{hor_sect}). Also $V(\Deltafr)\subseteq V(\Delta^\epsfr)$ and
$V(\Deltarear)\subseteq V(\Delta^\epsrear)$. If $h<r'$, then all vertices of
$\Delta$ belong to $\Crim$; this implies
$V(\Delta^\epsfr)=R=V(\Delta^\epsrear)$. And if $h=r'$, then the only vertex of
$\Delta$ not in $R$ is $t_C$. Since $t_C\in V(\Cfr)$, $t_C$ belongs to
$\Delta^\epsfr$. But $t_C$ also lies in the upper facet $S_{r'}(C)$ (in view of
$|p(2)p(4)\ldots p(d-1)|=r'$), and this facet is included in $\Delta^\epsrear$.
Hence $t_C\in \Delta^\epsfr\cap \Delta^\epsrear$, implying
$V(\Delta^\epsfr)=V(\Delta^\epsrear)$.
  \smallskip

\noindent\emph{Case 2}: ~$h\ge r'+2$. This is ``symmetric'' to the previous
case. If $h>r'+2$, then all vertices of $\Delta$ belong to $\Crim$, implying
$V(\Delta^\epsfr)=R=V(\Delta^\epsrear)$. And if $h=r'+2$, then $\Delta^\epsfr$
includes the lower facet $S_{r'+1}(C)$, which in turn contains the vertex $h_C$
(since $|p(1)p(3)\ldots p(d)|=r'+1$). Also $h_C\in V(\Crear)$ implies $h_C\in
V(\Delta^\epsrear)$, and we again obtain $V(\Delta^\epsfr)=V(\Delta^\epsrear)$.
  \smallskip

Thus, in both cases the raising flip $M\mapsto M'$ using $\Delta$ does not
change the vertex set of the w-membrane in question.
 \smallskip

\noindent\emph{Case 3}: ~$h=r'+1$. This case is most important. Now the lower
facet $S_{h-1=r'}(C)$ of $\Delta$ contains $t_C$, while the upper facet
$S_{h=r'+1}(C)$ contains $h_C$. Hence $t_C\in V(\Delta^\epsfr)$ and $h_C\in
V(\Delta^\epsrear)$. On the other hand, neither $t_C$ belongs to
$\Delta^\epsrear$ ($=\Deltarear\cup S_{r'+1}(C)$), nor $h_C$ belongs to
$\Delta^\epsfr$ ($=\Deltafr\cup S_{r'}(C)$).

It follows that $V(\Delta^\epsrear)=(V(\Delta^\epsfr)-\{t_C\})\cup\{h_C\}$, and
therefore the raising flip $M\mapsto M'$ using $\Delta$ replaces $t_C$ by
$h_C$, while preserving the other vertices of the w-membrane. Note also that
the vertices of $\Delta$ different from $t_C$ and $h_C$ form just the collection of
sets $XS$ such that $S$ runs over $\Nscr(\tilde P,\tilde Q)$, the set of
neighbors of $\tilde P:=p(2)p(4)\ldots p(d-1)$ and $\tilde Q:=p(1)p(3)\ldots
p(d)$.

Now applying Theorem~\ref{tm:gen_witn} to $\Wscr:=V(M),\,X,\tilde P,\tilde Q$
and $U:=X\tilde P$, we conclude that $\Wscr(M')$ is weakly $r$-separated, as
required.

This completes the proof of the theorem.
\end{proof}

It should be noted that any w-membrane in a cubillage on $Z(n,3)$ can be
expressed as a \emph{quasi-combined tiling} in the planar zonogon $Z(n,2)$, and
in this particular case, the statement of Theorem~\ref{tm:membr-wsepar} with
$r=1$ is equivalent to Theorem~3.4 in~\cite{DKK1}.

Also Theorem~\ref{tm:membr-wsepar} together with~\refeq{w-lattice} implies the
following property of the set $\bfW^\ast_{n,r}$ of representable maximal by
size weakly $r$-separated collections in $2^{[n]}$ (defined in the
Introduction).
  \begin{corollary} \label{cor:posetWast}
~$\bfW^\ast_{n,r}$ is a poset with the unique minimal element $V(\Zfr(n,r+2))$
and the unique maximal element $V(\Zrear(n,r+2))$ in which any two neighboring
elements are linked by a (raising or lowering) combinatorial flip.
  \end{corollary}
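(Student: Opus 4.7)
The plan is to make $\bfW^\ast_{n,r}$ into a poset by taking the reflexive-transitive closure of the raising combinatorial flips provided by Corollary \ref{cor:gen_witPQ}, and then to read off both the extremes and the flip-connectedness of covers from Corollary \ref{cor:seq_membranes} combined with the case analysis in the proof of Theorem \ref{tm:membr-wsepar}. Concretely, I declare $\Wscr \prec \Wscr'$ whenever $\Wscr' = (\Wscr - \{X\cup P\}) \cup \{X\cup Q\}$ arises from a raising combinatorial flip, i.e.\ whenever the witnesses $X\cup S$ with $S \in \Nscr^\downarrow(P,Q)$ all lie in $\Wscr$. Because $|X\cup Q| - |X\cup P| = (r'+1) - r' = 1$, the total weight $\sigma(\Wscr) := \sum_{Y \in \Wscr}|Y|$ strictly increases along $\prec$, so its transitive closure $\leq$ is antisymmetric and $(\bfW^\ast_{n,r}, \leq)$ is a partial order.

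The first genuinely non-routine step is to verify that $\prec$ sends $\bfW^\ast_{n,r}$ into itself: given $\Wscr = V(M)$ for some $M \in \Mscrw(Q)$ and a raising combinatorial flip $\Wscr \to \Wscr'$ at data $(X,P,Q)$, one must exhibit a cubillage $Q'$ on $Z(n,d)$ and a w-membrane $M' \in \Mscrw(Q')$ with $V(M') = \Wscr'$. The natural candidate is the middle-height fragment $\Delta := \Cfrag_{r'+1}$ of the cube $C = (X \mid P \cup Q)$: by \refeq{gencube_inner}, its extremal vertices are $t_C = X \cup P$ and $h_C = X \cup Q$, and its remaining vertices are precisely the witnesses $X\cup S$ for $S \in \Nscr(P,Q)$, all of which belong to $V(M)$ by hypothesis. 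I expect this geometric realisation to be the main obstacle: it requires a local re-cubillation producing a cubillage $Q'$ in which $M$ is a w-membrane whose $\eps$-rear side is tangent to $\Delta^\epsfr$. Once this is arranged, Case~3 of the proof of Theorem \ref{tm:membr-wsepar} identifies the raising fragment-flip using $\Delta$ as the vertex-level swap $t_C \leftrightarrow h_C$, giving $V(M') = \Wscr'$ and hence $\Wscr' \in \bfW^\ast_{n,r}$.

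The remaining assertions follow quickly. For any $\Wscr = V(M) \in \bfW^\ast_{n,r}$ with $M \in \Mscrw(Q)$, Corollary \ref{cor:seq_membranes} provides a chain $\Zfr = M_0, M_1, \ldots, M_k = M$ of raising fragment-flips in $Q$; by Cases~1--3 of the proof of Theorem \ref{tm:membr-wsepar}, each such step either preserves the vertex set (when the fragment height $h \neq r'+1$) or effects a raising combinatorial flip on it (when $h = r'+1$). Collapsing the vertex-preserving steps yields a chain of raising combinatorial flips from $V(\Zfr)$ to $\Wscr$, so $V(\Zfr) \leq \Wscr$. The dual argument, using that $\Zrear$ is the maximum of $\Mscrw(Q)$ in \refeq{w-lattice}, gives $\Wscr \leq V(\Zrear)$. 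Since $\sigma$ is strictly increased by flips, no flip can target $V(\Zfr)$ and none can emanate from $V(\Zrear)$, so these are the unique minimum and maximum of $(\bfW^\ast_{n,r}, \leq)$. Finally, by the definition of $\prec$, any two elements of $\bfW^\ast_{n,r}$ that cover each other under $\leq$ are linked by a single raising (equivalently, lowering) combinatorial flip, which completes the statement.
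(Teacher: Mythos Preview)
Your argument in the final paragraph is correct and is essentially the paper's intended proof: for any $\Wscr=V(M)\in\bfW^\ast_{n,r}$, the chain of w-membranes from Corollary~\ref{cor:seq_membranes} together with the case analysis in the proof of Theorem~\ref{tm:membr-wsepar} yields a chain of \emph{representable} collections linked by raising combinatorial flips from $V(\Zfr)$ to $\Wscr$, and dually to $V(\Zrear)$; since each flip increases $\sigma$ by~$1$, covers in the transitive closure are single flips.

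The problem is your ``first genuinely non-routine step,'' which you correctly flag as an obstacle and do not prove. It is also unnecessary. You have defined $\prec$ as an arbitrary raising combinatorial flip and then ask whether $\bfW^\ast_{n,r}$ is closed under it; this would require, given $\Wscr=V(M)$ and an abstract flip datum $(X,P,Q)$ with witnesses in $\Wscr$, to produce a (possibly different) cubillage $Q'$ realising the flipped collection as a w-membrane. That is a substantially stronger statement than Corollary~\ref{cor:posetWast}---it is in the direction of Conjecture~1---and the paper neither claims nor needs it. The fix is to define the order relation on $\bfW^\ast_{n,r}$ itself: set $\Wscr\prec\Wscr'$ only when \emph{both} lie in $\bfW^\ast_{n,r}$ and are linked by a raising combinatorial flip. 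Your final paragraph then already establishes that $V(\Zfr)$ and $V(\Zrear)$ are the unique extremes, because every intermediate $V(M_i)$ in the w-membrane chain is representable by construction, so the flip chain stays inside $\bfW^\ast_{n,r}$ without any appeal to a ``re-cubillation.''
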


A natural question is whether any two members of the set $\bfW^=_{n,r}$
(including $\bfW^\ast_{n,r}$) can be connected by a sequence of flips. This is strengthened in
the following
 \medskip

\noindent \textbf{Conjecture 1} ~Let $r$ be odd and $n>r+1$. Then any maximal
by size weakly $r$-separated collection $\Wscr\subseteq 2^{[n]}$ is
representable, i.e., there exists a cubillage $Q$ on $Z(n,r+2)$ and a
w-membrane $M$ of (the fragmentation of) $Q$ such that $V(M)=\Wscr$.
 \medskip

This together with Theorem~\ref{tm:membr-wsepar} would imply
$\bfW^\ast_{n,r}=\bfW^=_{n,r}$. The above assertion has been proved for $r=1$;
see Theorem~3.5 in~\cite{DKK1}.

%----------------------- Sec. 7

\section{The non-purity phenomenon for the weak $r$-separation}  \label{sec:non-pure}

Suppose that $R$ is a symmetric binary relation on elements of a
set $N$ and let $G$ be the graph whose vertices are the elements of $N$ and
whose edges are the pairs $\{u,v\}$ of distinct vertices subject to $uRv$. Let $\Cscr$ be
the set of cliques in $G$ (where a \emph{clique} is meant to be an
inclusion-wise maximal subset of vertices of which any two are connected by
edge). Then $\Cscr$ is said to be \emph{pure} if all cliques of $G$ have the
same size.

Recall that for an odd $r$ and $n>r$, ~$\bfW_{n,r}$ denotes the set of all
maximal by inclusion weakly $r$-separated collections in $2^{[n]}$. It was shown
in~\cite{DKK0} that $\bfW_{n,1}$ is pure for any $n$ (which affirmatively
answers Leclerc-Zelevinsky's conjecture on maximal weakly separated set-systems
in~\cite{LZ}). In other words, $\bfW_{n,1}=\bfW^=_{n,1} $ (=$\bfW^\ast_{n,1}$).

In this section we show that $\bfW_{n,r}$ need not be pure when $n=6$
and $r=3$. In fact, we borrow a construction from~\cite[Sect.~3]{DKK3} where it
is used to demonstrate the non-purity behavior for strongly $3$-separated
set-systems. (Note that by a general result due to Galashin and
Postnikov~\cite{GP},
 \begin{numitem1} \label{eq:rnr}
the set $\bfS_{n,r'}$ of all inclusion-wise maximal strongly $r'$-separated collections in
$2^{[n]}$ is pure if and only if $\min\{r',n-r'\}\le 2$, among all integers
$r',n$;
  \end{numitem1}
so $(n,r')=(6,3)$ is the smallest case when the non-purity of $\bfS_{n,r'}$
happens.)

To construct a non-pure set-system of our interest, consider the zonotope
$Z=Z(6,4)$. Note that the set $V(Z)$ of vertices of (the boundary of) $Z$
consists of all intervals and all 2-intervals containing 1 or 6. (This
relies on two observations: (a) any $A\subseteq[6]$ is a vertex of some
cubillage in $Z$, and therefore $V(Z)\cup\{A\}$ is (strongly) 3-separated,
by~\refeq{cub-separ}; and (b) the intervals and the 2-intervals containing 1 or 6
are exactly those subsets of $[6]$ that are $r'$-interlaced with any subset of
$[6]$, where $r'\le 4$.)

A direct enumeration shows that $|V(Z)|=52$. Therefore, $2^6-52=12$
subsets of the set $[6]$ are not in $V(Z)$; these are:
  \begin{numitem1} \label{eq:12sets}
~24, 245, 25, 235, 35, 135, 1356, 136, 1346, 146, 1246, 246.
  \end{numitem1}
(Recall that $a\cdots b$ stands for $\{a,\ldots,b\}$.) Let $A_i$ denote $i$-th
member in this sequence (so $A_1=24$ and $A_{12}=246$). Form the collection
  $$
  \Ascr:=V(Z)\cup\{A_1,A_5,A_9\}.
  $$
It consists of $52+3=55$ sets, whereas the number $s_{6,3}=w_{6,3}$ is equal to
$\binom{6}{0}+\binom{6}{1}+ \binom{6}{2}+\binom{6}{3}+\binom{6}{4}=57$. Now the
non-purity of $\bfW_{6,3}$ is implied by the following
  \begin{lemma} \label{lm:Ascript}
$\Ascr$ is a maximal weakly 3-separated collection in $2^{[6]}$.
  \end{lemma}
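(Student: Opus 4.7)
The plan is to verify two things about $\Ascr=V(Z)\cup\{A_1,A_5,A_9\}=V(Z)\cup\{24,35,1346\}$: that it is weakly $3$-separated, and that no further subset of $[6]$ can be adjoined while preserving this property. Throughout, $r=3$, so $r+1=4$ and $r+2=5$.

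For the first part I would invoke observation~(b) in the setup: every member of $V(Z)$ is $r'$-interlaced with every subset of $[6]$ for some $r'\le r+1=4$, hence is strongly (and therefore weakly) $3$-separated from every other subset of $[6]$. This covers every pair in $\Ascr$ except the three pairs among $\{24,35,1346\}$. For those I would compute the interval corteges by hand; one finds $\Iscr(24,35)=(\{2\},\{3\},\{4\},\{5\})$ ($4$-interlaced), $\Iscr(24,1346)=(\{1\},\{2\},[3,6])$ ($3$-interlaced), and $\Iscr(35,1346)=([1,4],\{5\},\{6\})$ ($3$-interlaced). Since each $r'\le 4<r+2$, the surround/size clause in the definition of weak $r$-separation is vacuous and weak $3$-separation follows at once.

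For the maximality part, any subset $B$ outside $\Ascr$ must lie among the nine ``forbidden'' subsets $245,25,235,135,1356,136,146,1246,246$, namely the sets in~\refeq{12sets} other than $A_1,A_5,A_9$; all other subsets of $[6]$ are in $V(Z)\subseteq\Ascr$. Because such a $B$ is still strongly $3$-separated from every element of $V(Z)$ (again by~(b)), any obstruction to adjoining $B$ must come from one of $24,35,1346$. I would pair each candidate with an explicit witness from this triple: $1346$ handles $245,25,235$; the set $24$ handles $135,1356,136$; and $35$ handles $146,1246,246$. In each of the nine pairs one checks that the interval cortege has exactly $r+2=5$ intervals, that the larger set surrounds the smaller, and that this ``surrounder'' has strictly more elements than the ``surrounded''---so both clauses~(a) and~(b) in the definition of weak $r$-separation fail, and $B$ cannot be adjoined.

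The main obstacle is not any subtle step but the organizational bookkeeping: there are twelve small pair-checks in total ($3$ in the weak-separation step and $9$ in the maximality step), and each requires identifying the symmetric difference, computing the cortege, and, when $r'=5$, comparing the surrounding/size data. The natural grouping of the forbidden candidates into three triples, each paired with one of the witnesses $24,35,1346$, keeps the case analysis clean, but the verification itself must be carried out one case at a time.
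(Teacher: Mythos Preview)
Your proof is correct and follows essentially the same approach as the paper: the same use of observation~(b) to handle all pairs involving $V(Z)$, and exactly the same pairing of the nine excluded sets with witnesses among $\{24,35,1346\}$ for the maximality step. The only cosmetic difference is that for the three pairs within $\{24,35,1346\}$ the paper argues via the bound $|A\triangle A'|\le 4$ (using $|A_{i-1}\triangle A_i|=1$ along the cycle) rather than computing the corteges explicitly.
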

  \begin{proof}
As mentioned above, any two sets $X\in V(Z)$ and $Y\in\Ascr$ are 3-separated, and
therefore they are weakly 3-separated. Observe that $|A_{i-1}\triangle A_i|=1$
for any $1\le i\le 12$ (where $A_0:=A_{12}$ and $A\triangle B$ stands for
$(A-B)\cup(B-A)$). Then any $A,A'\in\{A_1,A_5,A_9\}$ satisfy $|A\triangle
A'|\le 4$. This implies that $A$ and $A'$ are 3-separated. Therefore,
the collection $\Ascr$ is weakly 3-separated.

The maximality of $\Ascr$ follows from the observation that adding to $\Ascr$
any member of $\{A_i: 1\le i\le 12,\; i\ne 1,5,9\}$ would violate the weak
3-separation. Indeed, a routine verification shows that $A_1$ is not weakly
3-separated from any of $A_6,A_7,A_8$, and similarly for $A_5$ and
$\{A_{10},A_{11},A_{12}\}$, and for $A_9$ and $\{A_2,A_3,A_4\}$.
 \end{proof}

\noindent \textbf{Remark 1.} ~To visualize a verification in the above proof, it
is convenient to use the circular diagram in Fig.~\ref{fig:circular} where the
sets from the sequence in~\refeq{12sets} are disposed in the cyclic order. Here
the sets $A_1,A_5,A_9$ are drawn in boxes and connected by lines with those sets
where the weak 3-separation is violated.
  \medskip

\begin{figure}[htb]
  \vspace{0.2cm}
\begin{center}
\includegraphics{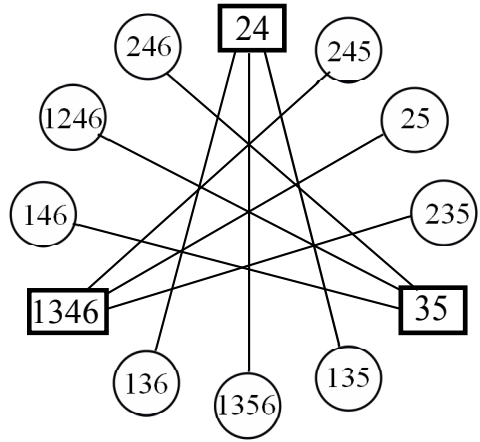}
\end{center}
 \caption{the circular diagram for $A_1,\ldots,A_{12}$.}
 \label{fig:circular}
  \end{figure}

In conclusion note that in light of the complete characterization for the strong
separation case in~\refeq{rnr}, it is tempting to characterize all pairs $(n,r)$
(with $r$ odd) for which $\bfW_{n,r}$ is pure. In particular, this is so when
$r=1$ (by~\cite{DKK0}), and it is not difficult to check that $\bfW_{n,r}$ is
pure if $n-r\le 2$. We conjecture that the remaining cases of $(n,r)$ give the
non-purity (similarly to~\refeq{rnr}), i.e., that for an odd $r$ and $n>r$,
$\bfW_{n,r}$ is pure if and only if $\min\{r,n-r\}\le 2$.

    %--------------------

\appendix

%----------------------- Appendix

\section{Proofs of two propositions on acyclicity}

 \noindent\textbf{Proof of Proposition~\ref{pr:acycl_all}}
~Let $C$ immediately precede $C'$, and let the cubes $C$, $C'$ and the facet
$F:=\Crear\cap(C')^{\rm fr}$ be of the form $(X\,|\,T)$, $(X'\,|\,T')$ and
$(\tilde X\,|\,\tilde T)$, respectively. Then $T=\tilde T\alpha$ and $T'=\tilde
T\beta$ for some $\alpha,\beta\in[n]$. Four cases are possible (as illustrated
in Fig.~\ref{fig:four}):
  \smallskip

(i) ~$X=X'=\tilde X$;

(ii) ~$X,X',\tilde X$ are different (then $\tilde X=X\alpha=X'\beta$);

(iii) ~$X\ne X'=\tilde X$ (then $\tilde X=X\alpha$);

(iv) ~$X'\ne X=\tilde X$ (then $\tilde X=X'\beta$).
 \smallskip

\begin{figure}[htb]
  \vspace{-0.2cm}
\begin{center}
\includegraphics[scale=1]{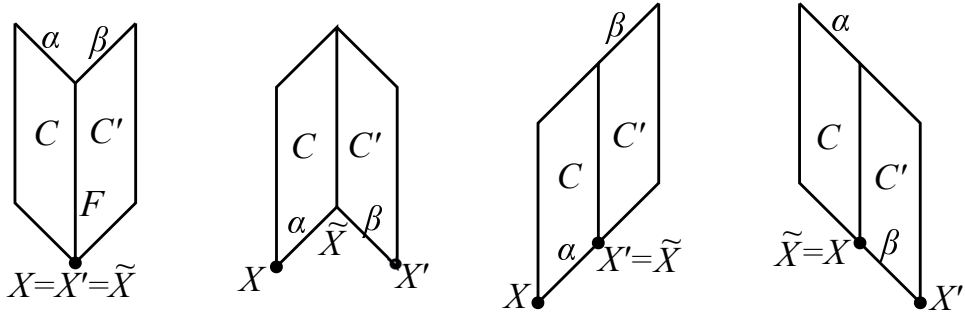}
\end{center}
\vspace{-0.3cm}
 \caption{Cases (i),(ii),(iii),(iv) (from left to right)}
 \label{fig:four}
  \end{figure}

Let us associate with a cube $C''=(X''\,|\,T'')$ a label
$\omega(C'')\in\{0,1,2\}$ by the following rule:
 \begin{itemize}
\item[($\ast$)] ~$\omega(C'')=0$\; if\; $n\ne X'',T''$; \quad
 $\omega(C'')=1$ \;if\; $n\in T''$; \quad
  $\omega(C'')=2$\; if\; $n\in X''$.
 \end{itemize}

The following observation is the key.
 \medskip

\noindent \textbf{Claim} ~\emph{For $C,C'$ as above, $\omega(C)\le\omega(C')$.}
 \medskip

\noindent\textbf{Proof of the Claim} ~ We may assume that
$\omega(C)\ne\omega(C')$. Then $n\in T\cup X\cup T'\cup X'$ but  $n$ belongs to neither $\tilde T$ nor $X\cap
X'$. This implies that either $\alpha =n$ or $\beta=n$ (in view of $\tilde
T=T-\alpha=T'-\beta$). Note that, as explained in Sect.~\SEC{smembr} (when proving~\refeq{unique_inner}),
  \begin{numitem1} \label{eq:Cfr_facets}
for a cube $C$, a facet $F_i(C)$ ($G_i(C)$) is in $\Cfr$ if and only if $d-i$
is even (resp. odd).
  \end{numitem1}
Using this for $C$ and $F$ as above and considering the inclusion
$F\subset\Crear$, one can conclude that if $\alpha=n$, then the root $\tilde X$
of $F$ and the root $X$ of $C$ are different (taking into account that $n$ is
the maximal element in $T$). In its turn, $F\subset (C')^{\rm fr}$ implies that
if $\beta=n$, then $\tilde X=X'$. This leads to the following:
  \begin{numitem1} \label{eq:alpha-beta-n}
$\alpha=n$ is possible only in cases (ii) and (iii), whereas $\beta=n$ is
possible only in cases~(i) and~(iii).
  \end{numitem1}
In particular, case~(iv) is impossible at all (when $\omega(C)\ne\omega(C')$).
As to the other three cases, we obtain from~\refeq{alpha-beta-n} that
  \begin{itemize}
\item[(a)] in case (i), $\omega(C)=0$ and $\omega(C')=1$ (since $n=\beta\in
T'$);
\item[(b)] in case (ii), $\omega(C)=1$ (since $n=\alpha\in T$) and
$\omega(C')=2$ (since $\tilde X=X\alpha=X'\beta$ implies $\alpha\in X'$);
\item[(c)] in case (iii),  if $\alpha=n$ then $\omega(C)=1$ and $\omega(C')=2$
(since $X'=X\alpha$), and if $\beta=n$ then $\omega(C)=0$ and $\omega(C')=1$.
  \end{itemize}

Thus, $\omega(C)\le\omega(C')$ holds in all cases, as required. \hfill\qed
\medskip

Now we finish the proof of the proposition by induction on $n$. This is trivial
when $n=d$, so assume that $n>d$ and that the assertion is valid for $(n',d')$
with $n'<n$.

Suppose, for a contradiction, that $\Gamma_{n,d}$ has a directed cycle
$\Cscr=(C_0,C_1,\ldots,C_k=C_0)$ (where each $C_i$ immediately precedes
$C_{i+1}$). Then the Claim implies that $\omega(C_i)$ is the same number $q$
for all $i$. Consider three cases (where $C_i=(X_i\,|\,T_i)$).
  \smallskip

\emph{Case 1}: $q=0$. Then $\Cscr$ is a directed cycle in $\Gamma_{n-1,d}$,
contrary to the inductive assumption.
  \smallskip

\emph{Case 2}: $q=2$. Define $X'_i:=X_i-n$ and $C'_i:=(X'_i\,|\,T_i)$,
$i=0,\ldots,k$. Then each $C'_i$ is a cube in $Z(n-1,d)$, and the sequence
$C'_0,C'_1,\ldots,C'_k$ forms a directed cycle in $\Gamma_{n-1,d}$; a
contradiction.
  \smallskip

\emph{Case 3}: $q=1$. Define $T'_i:=T_i-n$ and $C'_i:=(X_i\,|\,T'_i)$,
$i=0,\ldots,k$. Then each $C'_i$ can be regarded as a cube in $Z(n-1,d-1)$ (in
view of $|T'_i|=d-1$). Considering~\refeq{Cfr_facets} and using the fact that
$n$ is the maximal element in $T_i$, one can conclude that if $(Y\,|\,U)$ is a
facet in $C_i^{\rm fr}$, then $(Y\cap[n-1]\,|\, U\cap[n-1])$ is a facet in
$(C'_i)^{\rm rear}$, and similarly for facets in $C_i^{\rm rear}$ and
$(C'_i)^{\rm fr}$. Then the fact that $\Crear_i\cap\Cfr_{i+1}$ is a facet
implies that $(C'_i)^{\rm fr}\cap (C'_{i+1})^{\rm rear}$ is a facet as well. This
means that $C'_{i+1}$ immediately precedes $C'_i$. Therefore, the sequence
$C'_k,C'_{k-1},\ldots,C'_1,C'_0$ forms a directed cycle in $\Gamma_{n-1,d-1}$;
a contradiction.

This completes the proof of the proposition. \hfill\qed
  \medskip

  \noindent\textbf{Proof of Proposition~\ref{pr:acyc_Qfrag}}
~For a fragment $\Delta=\Cfrag_h$ of a cube $C=(X\,|\,T)$, denote $|X|+h-1/2$
by $\ell(\Delta)$, called the \emph{height} of $\Delta$.

Suppose that there exist fragments $\Delta_0,\Delta_1,\ldots,\Delta_k=\Delta_0$
forming a directed cycle in $\Gamma_{\Qfrag}$. Consider two consecutive
fragments $\Delta=\Delta_{i-1}$ and $\Delta'=\Delta_i$. Then the sides
$\Delta^\epsrear$ and $\Delta^\epsfr$ share a facet $F$, and either (a) $F$ is
a vertical facet of both (in terminology of~\refeq{fragm_cube}), or (b) $F$ is
the upper facet of $\Delta$ and the lower facet of $\Delta'$. Obviously,
$\ell(\Delta')=\ell(\Delta)$ in case~(a), and $\ell(\Delta')=\ell(\Delta)+1$ in
case~(b). This implies
  $$
 \ell(\Delta_0)\le\ell(\Delta_1)\le \cdots \le
\ell(\Delta_{k-1})\le\ell(\Delta_0).
  $$
Then all fragments $\Delta_i$ have the same height, and therefore each pair of
consecutive fragments shares a vertical facet. But this means that the sequence
of cubes containing these fragments forms a cycle in the graph $\Gamma_{n,d}$,
contrary to Proposition~\ref{pr:acycl_all}.
  \hfill\qed

%----------------------- Appendix

\section{A concept of weak $r$-separation when $r$ is even}

Up to now, we have dealt with the weak $r$-separation when $r$ is odd. In this
section we attempt to introduce and explore an analogous concept when $r$ is
even.

For $A',B'\subseteq [n]$, we say that $A'$ \emph{surrounds $B'$ from the right}
if $\max(A'-B')>\max(B'-A')$.
\medskip

\noindent\textbf{Definition.} ~For an \emph{even} integer $r>0$ and an integer
$n>r$, sets $A,B\subseteq[n]$ are called \emph{weakly $r$-separated} if they
are $\tilde r$-interlaced with $\tilde r\le r+2$, and in case $\tilde r=r+2$,
either (a) $A$ surrounds $B$ from the right and $|A|\le|B|$, or (b) $B$
surrounds $A$ from the right and $|B|\le |A|$. Accordingly, a set-system
$\Wscr\subseteq 2^{[n]}$ is called weakly $r$-separated if any two members of
$\Wscr$ are such.
\medskip

(Note that this matches the definition for $r$ odd in the Introduction.)
\medskip

\noindent\textbf{Remark 2.} ~In contrast to the odd case, the size $|\Wscr|$ of
a weakly $r$-separated collection $\Wscr\subseteq 2^{[n]}$ with $r$ even can
exceed the value $s_{n,r}$ (defined in~\refeq{snr}). The simplest example is
given by $n=r+2$ and $\Wscr=2^{[n]}$. Indeed, in this case $s_{n,r}$ amounts to
$\sum(\binom{r+2}{i}\, \colon\, i=0,\ldots,r+1)=2^{r+2}-1$, which is less than
$|\Wscr|=2^{r+2}$. Observe that $\Wscr$ has only one pair $\{A,B\}$ of
$(r+2)$-interlaced sets, namely, $A=\{2,4,\ldots,r\}$ and
$B=\{1,3,\ldots,r-1\}$. These $A,B$ are weakly $r$-separated since $|A|=|B|$. (By
the way, one can see that this $\Wscr$ represents the vertex set of a
w-membrane in the fragmentation of the cube $C=(\emptyset\,|\, [n])$, forming
the trivial cubillage on $Z(n,n)$.)
  \medskip

Thus, if one wished to get rid of exceeding $s_{n,r}$, one should impose
additional restrictions on $\Wscr$. The above example prompts an idea on this
way.
\medskip

\noindent\textbf{Definition.} ~Let us say that a pair $\{A,B\}$ of subsets of
$[n]$ is a \emph{double $r$-comb} if $A,B$ are $(r+2)$-interlaced and
$|A\triangle B|=r+2$, i.e., $A\triangle B$ consists of elements $a_1<a_2<\ldots
<a_{r+2}$ of $[n]$, one of $A-B$ and $B-A$ is formed by $a_1,a_3,\ldots
a_{r+1}$, and the other by $a_2,a_4,\ldots,a_{r+2}$.
 \smallskip

A result involving a double $r$-comb and corresponding
neighbors is presented in the theorem below. This is in the spirit of
Theorem~\ref{tm:PQYstr} (concerning an odd $r$), but now the situation becomes more intricate.

More precisely, let $r':=r/2+1$, where $r$ is even as before. Let
$P=\{p_1,\ldots,p_{r'}\}$ and $Q=\{q_1,\ldots,q_{r'}\}$ consist of elements of
$[n]$ such that $p_1<q_1 <\ldots< p_{r'}<q_{r'}$. Define the sets
$\Nscr^\uparrow(P,Q)$ and $\Nscr^\downarrow(P,Q)$ of neighbors of $P,Q$ in the
same way as in~\refeq{NupP} and~\refeq{NdownQ} (where now $P,Q$ satisfy
$|P|=|Q|=r'$). Let $X\subseteq [n]-(P\cup Q)$ and let $Y\subseteq[n]$ be
different from $XP$ and $XQ$. We assert the following.

\begin{theorem} \label{tm:local_neighb}
Let $r,n,r',P,Q,X,Y$ be as above.

{\rm 1}. Suppose that $Y$ and $XS$ are weakly $r$-separated for any
$S\in\Nscr^\uparrow(P,Q)$, but $Y$ and $XP$ are not. If, in addition, $Y$ and
$XP$ are $(r+2)$-interlaced, then
  \begin{numitem1} \label{eq:XQaA}
 $Y=XQ\cup\{a\}$ for an element $a\in [n]-XPQ$ such that $a>p_1$.
  \end{numitem1}

{\rm 2}. Suppose that $Y$ and $XS$ are weakly $r$-separated for any
$S\in\Nscr^\downarrow(P,Q)$, but $Y$ and $XQ$ are not. If, in addition, $Y$ and
$XQ$ are $(r+2)$-interlaced, then
  \begin{numitem1} \label{eq:XPaB}
$Y=XP-b$ for some $b\in X$ such that $b>p_1$).
  \end{numitem1}
     \end{theorem}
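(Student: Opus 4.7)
The plan is to imitate the strategy of Theorem~\ref{tm:PQYstr}, proving assertion~(1) by contradiction and then deducing~(2) by complementation. For~(1), assume the weak-separation hypotheses hold but $Y$ does not equal $XQ\cup\{a\}$ for any $a\in[n]-XPQ$ with $a>p_1$; the goal is to produce some $S\in\Nscr^\uparrow(P,Q)$ with $\{Y,XS\}$ bad. Let $\Iscr:=\Iscr(Y,XP)$ and classify its intervals as $Y$-bricks or $XP$-bricks. Since $\{Y,XP\}$ is $(r+2)$-interlaced and bad, exactly one of (A)~$XP$ surrounds $Y$ from the right with $|XP|>|Y|$, or (B)~$Y$ surrounds $XP$ from the right with $|Y|>|XP|$, holds.

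First I would rule out case~(A). Here the last brick of $\Iscr$ is an $XP$-brick. Taking $S=Pq_{r'}\in\Nscr^\uparrow(P,Q)$, the transformation $XP\mapsto XS$ either creates a new $Y$-brick $\{q_{r'}\}$ past the last brick (so $|\Iscr(Y,XS)|>r+2$) or pulls $q_{r'}$ inside an existing $Y$-brick while $XS$ still surrounds $Y$ from the right with $|XS|>|Y|$; in either situation a brick-tracking argument, paralleling Cases~I--IV in the proof of Theorem~\ref{tm:PQYstr}, shows that $\{Y,XS\}$ is bad, contradicting the hypothesis.

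In case~(B) the cortege ends in a $Y$-brick. The heart of the proof is a refinement lemma analogous to Lemma~\ref{lm:refinedPQ}: under the weak-separation hypothesis, every $p\in P$ forms a singleton $XP$-brick of $\Iscr$, each $q\in Q$ lies inside some $Y$-brick, and $Y-(XP\cup Q)$ consists of at most one element. Its proof would compare $\Iscr(Y,XP)$ simultaneously with the corteges coming from the three kinds of neighbors $P-p$, $Pq$, and $(P-p)q$, exactly as in Lemma~\ref{lm:refinedPQ}, showing that any failure of refinement produces, via a two-step argument chaining a $P\to P-p$ move with a $P-p\to(P-p)q$ move, an $S\in\Nscr^\uparrow(P,Q)$ with $\{Y,XS\}$ bad. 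Given the refinement, $Y$ must be $XQ$ (excluded by hypothesis) or $XQ\cup\{a\}$ for a unique extra element $a\in[n]-XPQ$, and the constraint $a>p_1$ is forced by the $(r+2)$-interlacing assumption, since $a<p_1$ would prepend an extra alternating term to produce an $(r+3)$-interlaced pair. Assertion~(2) follows from~(1) by complementation: setting $Y':=[n]-Y$ and $X':=[n]-(X\cup P\cup Q)$, one verifies $X'P=[n]-XQ$ and that the involution $S\mapsto(P\cup Q)-S$ bijects $\Nscr^\downarrow(P,Q)$ onto $\Nscr^\uparrow(P,Q)$ compatibly with $[n]-XS=X'((P\cup Q)-S)$; since complementation preserves $(r+2)$-interlacing (as $\Iscr(A,B)=\Iscr(\bar A,\bar B)$) and swaps the two sides of ``surrounds from the right'' (as $\max(A-B)=\max(\bar B-\bar A)$), weak $r$-separation is preserved, so~(1) applied to $(Y',X',P,Q)$ yields $Y'=X'Q\cup\{a\}$ with $a\in X$ and $a>p_1$, which rewrites as $Y=XP-a$.

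The main obstacle is the refinement lemma in case~(B): unlike the odd case, where $|Q|=|P|+1$ made $\max(Q)$ automatically dominate and forced the dichotomy $(\ast)$ or $(\ast\ast)$ of Lemma~\ref{lm:refinedPQ}, here the equality $|P|=|Q|$ forces a finer examination of how the extra mass in $Y-XP$ beyond $Q$ is distributed among the bricks and where the unique extra element $a$ may sit relative to $p_1$ and $q_{r'}$. Carrying out the systematic brick-by-brick comparisons of the corteges for $XP$, $X(P-p)$, $X(Pq)$, and $X((P-p)q)$ in a manner compatible with the $(r+2)$-interlacing constraint, and ruling out the symmetric mass-exchange configurations that do not appear in the odd case, is the technical heart of the argument.
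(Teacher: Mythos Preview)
Your global architecture (prove~(1) and deduce~(2) by complementation) matches the paper, and your complementation argument for~(2) is correct. The genuine gap is in your handling of case~(A).

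The single neighbor $S=Pq_{r'}$ does \emph{not} always produce a bad pair. First, your stated dichotomy is confused: since $q_{r'}\in XS$, the move $XP\mapsto XPq_{r'}$ can never ``create a new $Y$-brick $\{q_{r'}\}$''; if $q_{r'}\notin Y$ it enlarges the $XS$-side, and if $q_{r'}\in Y$ it is removed from $Y-XS$. Second, and more seriously, suppose $q_{r'}\in Y$ and $\{q_{r'}\}$ is a singleton $Y$-brick of $\Iscr$ (this is exactly the situation the paper's $(\ast\ast)$ captures). Then passing to $XS=XPq_{r'}$ deletes that $Y$-brick and merges the two adjacent $X{\cdot}$-bricks, so $|\Iscr(Y,XS)|$ drops to $r$, and $\{Y,XS\}$ becomes (strongly) $r$-separated, not bad. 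So your contradiction in~(A) fails precisely in the refined-$q$ scenario, and ``paralleling Cases~I--IV of Theorem~\ref{tm:PQYstr}'' does not rescue it, because those cases are run \emph{after} the refinement dichotomy of Lemma~\ref{lm:refinedPQ}, not before.

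What the paper does instead is observe that the proof of Lemma~\ref{lm:refinedPQ} goes through verbatim for even $r$ (the ``same type'' notion determines who surrounds whom from the right, and the size comparison transfers). This yields the dichotomy $(\ast)$ or $(\ast\ast)$ up front. Under the pattern you call~(A) (the paper's~(V2)): if $(\ast)$ holds then $X=\emptyset$, every $XP$-brick is a singleton $\{p_i\}$, hence $|XP|=r'\le|Y|$, contradicting badness directly; if $(\ast\ast)$ holds then each $A_i=\{q_i\}$, which forces $p_1\in Y$, and the neighbor $S=(P-p_1)q_1$ (not $Pq_{r'}$) gives the bad pair. Under~(B) (the paper's~(V1)): $(\ast\ast)$ contradicts badness, while $(\ast)$ forces $X=\emptyset$, then $Q\subseteq Y$, and a size-count with the neighbor $Pq_i$ pins $|Y|=r'+1$, yielding the single extra element $a$. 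So the refinement lemma is needed for both~(A) and~(B), not only for~(B) as you propose; once you invoke it globally, the rest of your outline goes through.
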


(One can check that $Y$ as in~\refeq{XQaA} (resp.~\refeq{XPaB}) is indeed
weakly $r$-separated from any member of $\Nscr^\uparrow(P,Q)$ (resp.
$\Nscr^\downarrow(P,Q)$) but not from $XP$ (resp. $XQ$), and the essence of the
theorem is that there is no other $Y\ne XP,XQ$ with such a property.)
\medskip

\begin{proof}
Let us prove assertion 1.

Analyzing the proof of Lemma~\ref{lm:refinedPQ}, one can realize that it
remains valid for corresponding $P,Q,X,Y$ when $r$ is even as well. We further
rely on this lemma, borrowing, with a due care, terminology and notation from
Section~\SEC{proof2}. In particular: when sets $A,B$ are not weakly
$r$-separated, the pair $\{A,B\}$ is called \emph{bad}; $\Iscr$ abbreviates
$\Iscr(Y,XP)$; the intervals in $\Iscr$ are viewed as $\ldots
<A_{i-1}<B_i<A_i<B_{i+1}\ldots$\;, where $A_{i'}$ ($B_{i'}$) stands for a
$Y$-brick (resp. $XP$-brick). Also we may assume that $Y\cap X=\emptyset$
(though $Y$ and $P$ need not be disjoint).

Since $Y$ and $XP$ are required be $(r+2)$-interlaced, $\Iscr$ consists of $r'$ $Y$-bricks
and $r'$ $XP$-bricks. So we may assume that $\Iscr$ is viewed as either
  \begin{itemize}
\item[(V1)] ~$B_1<A_1<B_2<A_2<\ldots <B_{r'}<A_{r'}$, or
\item[(V2)] ~$A_1<B_2<A_2<\ldots <B_{r'}<A_{r'}<B_{r'+1}$.
  \end{itemize}

Next we consider two possible cases.
\medskip

\noindent\underline{\emph{Case A}}: $(\ast\ast)$ from Lemma~\ref{lm:refinedPQ}
is valid. Then $A_i=\{q_i\}$ for each $i$. Hence $Y-XP=Q$ and $|Y-XP|=r'\le
|XP-Y|$ (since each $XP$-brick $B_j$ contains an element of $XP-Y$). Suppose
that~(V1) takes place. Then $Y$-surrounds $XP$ from the right. This contradicts
the condition that $\{Y,XP\}$ is bad.

And if~(V2) takes place, then $p_1\notin XP-Y$ (since $p_1<q_1$ and
$\{q_1\}=A_1$). Hence $p_1\in Y$. For $S:=(P-p_1)q_1$, the transformation
$XP\mapsto XS$ replaces the first brick $\{q_1\}$ of $\Iscr$ by $\{p_1\}$,
forming the first $Y$-brick of $\Iscr(Y,XS)$. Then $|\Iscr(Y,XS)|=r'+2$,
$|XS|=|XP|$, $XS$ surrounds $Y$ from the right, and therefore the badness of
$\{Y,XP\}$ implies that of $\{Y,XS\}$; a contradiction.
\medskip

\noindent\underline{\emph{Case B}}: $(\ast)$ from Lemma~\ref{lm:refinedPQ} is
valid. Then $Y\cap P=\emptyset$, and the fact that $\Iscr$ has exactly $r'$
$XP$-bricks implies that $X=\emptyset$; so we may ignore $X$ in what follows.
If~(V2) takes place, then $XP$ surrounds $Y$ from the right. Since each
$P$-brick $B_i$ is a singleton, $|P|=r'\le|Y|$, contradicting the condition that
$\{Y,P\}$ is bad.

Now let~(V1) take place. Then $B_i=\{p_i\}$ for each $i$. Suppose that there is
$q_i\in Q$ such that $q_i\notin Y$. Then either~(a) $q_i$ lies in some
$Y$-brick $A_j$, or~(b) no brick of $\Iscr$ contains $q_i$. In case~(a), we
have $i=j$ (in view of $p_i<q_i<p_{i+1}$ and $p_i<A_i<p_{i+1}$, letting
$p_{r'+1}:=n+1$). Moreover, $\min(A_i)<q_i<\max(A_i)$ (since both
$\min(A_i)$ and $\max(A_i)$ are in $Y$). Taking $S:=Pq_i$, we obtain
$|\Iscr(Y,S)|=|\Iscr|+2$ (since $P\mapsto S$ replaces $A_i$ by the $S$-brick
$\{q_i\}$ and two $Y$-bricks). Hence $\{Y,S\}$ is bad; a contradiction.

In case~(b), three subcases are possible: either~(b1) $p_i<q_i<A_i$; or~(b2)
$A_i<q_i<p_{i+1}$, or~(b3) $i=r'$ and $A_{r'}<q_{r'}$. In these subcases,
taking as $S$ the neighbors $(P-p_i)q_i$, $(P-p_{i+1})q_i$, and $Pq_{r'}$,
respectively, one can see that $\{Y,S\}$ is bad.

Thus, $Q\subseteq Y$. Note that any $Y$-brick $A_i$ contains at most one
element of $Q$ (for if $A_i$ would contain $q_{j-1}$ and $q_j$ say, then $A_i$
should contain $p_j$ as well, which is impossible). It follows that each $A_i$
contains exactly one element of $Q$, namely, $q_i$. Since $\{Y,P\}$ is bad and
$Y$ surrounds $P$ from the right, there must be $|Y|>|P|=r'$. So at least one
$Y$-brick $A_i$ has size $\ge 2$. For such an $A_i$, taking $S:=Pq_i$, one can
see that $|\Iscr(Y,S)|=|\Iscr|$ and that $Y$ surrounds $S$ from the right. Then
$|Y|\le|S|$ (otherwise $\{Y,S\}$ is bad). This together with $|Y|>r'$ and
$|S|=|P|+1=r'+1$ gives $|Y|=r'+1$. The latter means that there is exactly one
brick $A_i$ of size $\ge 2$; moreover, $|A_i|=2$. Then $A_i=\{q_i,a\}$, where
$a$ is as required in~\refeq{XQaA}, yielding assertion~1 of the theorem.
  \smallskip

Assertion~2 of the theorem can be shown by symmetry and we leave details to the reader.
\end{proof}

\noindent\textbf{Remark 3.} Some neighbors of $P,Q$ arising in connection with
Theorem~\ref{tm:local_neighb} play an especial role. More precisely, let
$Y=XQ\cup\{a\}$ be as in~\refeq{XQaA}; then $p_i<a<p_{i+1}$ for some $i\in[r']$
(letting $p_{r'+1}:=n+1$). One can check that in the upper neighbor collection
$\{S\subset P\cup Q\,\colon\, S\ne P,Q,\; r'\le |S|\le r'+1\}$ (which includes
$\Nscrup(P,Q)$) there is \emph{exactly one} set $S$ such that $\{Y,XS\}$ is a
double $r$-comb; this is $S=Pq_i$. (Then $XS-Y=\{p_1,\ldots,p_{r'}\}$ and
$Y-XS=\{q_1,\ldots,q_{i-1},a,q_{i+1},\ldots, q_{r'}\}$.) Symmetrically, for
$Y=XP-b$ as in~\refeq{XPaB}, in the lower neighbor collection $\{S\subset P\cup
Q\,\colon\, S\ne P,Q,\; r'-1\le |S|\le r'\}$ (which includes $\Nscrdown(P,Q)$) there
is exactly one $S$ such that $\{Y,XS\}$ is a double $r$-comb. Namely, if
$p_i<b<p_{i+1}$ (letting $p_{r'+1}:=n+1$), then $S=Q-q_i$. (In this case,
$Y-XS=\{p_1,\ldots,p_{r'}\}$ and $XS-Y=\{q_1,\ldots,q_{i-1},b,q_{i+1},\ldots,
q_{r'}\}$). 
 \medskip

The rest of this section is devoted to a geometric construction representing a
class of $r$-separated collections. This relies on
Theorem~\ref{tm:local_neighb} and is in the spirit of the construction from
Sect.~\SSEC{wsepar-wmembr} (with $r$ odd), though looks a bit more intricate.
We will use terminology and notation from Sect.~\SEC{weaksep-cubil}.

As before, let $r$ be even and $r'=r/2+1$. For $d:=r+2$, consider a cubillage
$Q$ on the zonotope $Z=Z(n,d)$ and its fragmentation $\Qfrag$. For each cube
$C=(X|T)\in Q$, we distinguish two ``central'' fragments $\Cfrag_{r'}$ and
$\Cfrag_{r'+1}$. They share the middle horizontal section $S_{d/2}(C)$
($=C\cap H_{|X|+r'}$), which contains the specified vertices $t_C=XP$ and
$h_C=XQ$, where $T=(p_1<q_1<\ldots <p_{r'}<q_{r'})$, $P=\{p_1,\ldots,p_{r'}\}$
and $Q=\{q_1,\ldots,q_{r'}\}$ (so $\{t_C,h_C\}$ forms a double $r$-comb).
 \medskip

\noindent\textbf{Definitions.} For a cube $C\in Q$, the set $\Cfrag_{r'}\cup
\Cfrag_{r'+1}$ is called the \emph{doubled fragment}, or the \emph{center}, of
$C$ and denoted by $\Cfrag_{\rm cup}$; the remaining fragments $\Cfrag_h$ of $C$
($h\ne r',r'+1$) are called \emph{ordinary} ones. By the \emph{enlarged
fragmentation} of $Q$ we mean the complex generated by the centers and ordinary
fragments of all cubes of $Q$, denoted as $\Qfragen$, i.e., it is obtained from
$\Qfrag$ by merging the pieces $\Cfrag_{r'}$ and $\Cfrag_{r'+1}$ into
$\Cfrag_{\rm cup}$ for each $C\in Q$. Depending on the context, we may also think
of $\Qfragen$ as the collection of doubled and ordinary fragments over all the
cubes of $Q$.
 \medskip

This gives rise to an important subclass of w-membranes. More precisely, when a
w-membrane $M$ of $Q$ is a subcomplex (of dimension $d-1$) of $\Qfragen$, we
say that $M$ is an \emph{e-membrane}. It is not difficult to show that a
w-membrane $M$ of this sort is characterized by the property that no facet of
$M$ is the middle section of a cube of $Q$, or, equivalently, that for each
cube $C\in Q$, $M$ meets at most one vertex among $t_C,h_C$.

Like c- and w-membranes, the set of e-membranes of $Q$ forms a distribute
lattice. This is based on the following

  \begin{prop} \label{pr:acyc_Qfragen}
The directed graph $\Gamma_{\Qfragen}$ whose vertices are the fragments in
$\Qfragen$ and whose edges are the pairs $(\Delta,\Delta')$ of fragments such
that $\Delta$ immediately precedes $\Delta'$ (in the sense that
$\Delta^\epsrear$ and $(\Delta')^{\,\epsfr}$ share a facet) is acyclic.
\hfill\qed
  \end{prop}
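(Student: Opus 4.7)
The plan is to mimic the height-function argument from the proof of Proposition~\ref{pr:acyc_Qfrag}, adapting it to accommodate the doubled fragments. To each fragment $\Delta$ of $\Qfragen$ I would assign a height $\ell(\Delta)$: for an ordinary $\Cfrag_h$ with $C=(X\,|\,T)$ and $h\ne r',r'+1$, set $\ell(\Delta):=|X|+h-\tfrac12$; for a center $\Cfrag_{\rm cup}$, set $\ell(\Delta):=|X|+r'$. The aim is to show that every edge $(\Delta,\Delta')$ of $\Gamma_{\Qfragen}$ satisfies $\ell(\Delta)\le\ell(\Delta')$, with equality only when the underlying cubes of $\Delta$ and $\Delta'$ are immediately adjacent in the sense of Proposition~\ref{pr:acycl_all}.

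The key step is a case analysis of the facet $F$ shared between $\Delta^\epsrear$ and $(\Delta')^\epsfr$, depending on whether $F$ is \emph{horizontal} (contained in some hyperplane $H_\ell$) or \emph{vertical} (cut from the relative boundary of the ambient cube). A vertical facet of an ordinary fragment spans one unit of height, while a vertical facet of a center spans two; consequently vertical sharing can occur only between two ordinary fragments with coinciding height intervals, or between two centers $\Cfrag_{\rm cup}$ and $(C')^\equiv_{\rm cup}$ with $|X|=|X'|$, and in both subcases $\ell(\Delta)=\ell(\Delta')$. Horizontal sharing forces $C=C'$ (by the cubillage property), and the only intra-cube horizontal transitions surviving in $\Qfragen$ are $\Cfrag_{h-1}\to\Cfrag_h$ for $h\in\{2,\ldots,r'-1\}\cup\{r'+3,\ldots,d\}$, together with $\Cfrag_{r'-1}\to\Cfrag_{\rm cup}$ and $\Cfrag_{\rm cup}\to\Cfrag_{r'+2}$; a direct calculation using the formulas in~\refeq{sides_fragm} shows $\ell$ strictly increases (by $1$ or by $3/2$) along each of these.

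Given this, suppose for contradiction that $\Gamma_{\Qfragen}$ has a directed cycle $\Delta_0,\ldots,\Delta_k=\Delta_0$. Summing $\ell(\Delta_{i-1})\le\ell(\Delta_i)$ around the cycle forces equality throughout, so every edge is vertical; hence the $\Delta_i$ are all ordinary with a common value of $|X_i|+h_i$, or all centers with a common $|X_i|$. Letting $C_i\in Q$ denote the cube containing $\Delta_i$, the vertical facet shared by $\Delta_{i-1}^\epsrear$ and $\Delta_i^\epsfr$ lies inside $C_{i-1}^{\rm rear}\cap C_i^{\rm fr}$; and since in a cubillage distinct cubes meet only along a full common face, this intersection is itself a full facet of both cubes, so $C_{i-1}$ immediately precedes $C_i$ in the sense of Proposition~\ref{pr:acycl_all}. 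Then $C_0,\ldots,C_k$ is a directed closed walk in $\Gamma_{n,d}$, contradicting Proposition~\ref{pr:acycl_all}. The step I expect to require the most care is the descent from fragment-level facet-sharing to cube-level facet-sharing in the center-center case, where one must check that the two-level slab facet of $\Cfrag_{\rm cup}$ is not artificially assembled from several smaller cube facets; this is forced by the cubillage axiom once one observes that both centers have their middle sections sitting at the same height.
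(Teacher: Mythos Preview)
Your argument is correct and follows essentially the same route as the paper: both reduce a hypothetical cycle in $\Gamma_{\Qfragen}$ to a closed walk in the cube graph $\Gamma_{n,d}$ by observing that intra-cube transitions strictly raise height while inter-cube transitions come from shared $(d-1)$-faces of cubes, then invoke Proposition~\ref{pr:acycl_all}. Your version is more explicit (a tailored height function, the ordinary/center dichotomy for vertical facets), whereas the paper simply notes that same-cube edges increase height and passes directly to the sequence of underlying cubes; the extra structure you record is correct but not needed for the conclusion.
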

  \begin{proof}
This is similar to the proof of Proposition~\ref{pr:acyc_Qfrag} and is briefly
as follows. Suppose that fragments $\Delta_0,\Delta_1,\ldots,\Delta_k=\Delta_0$
of $\Qfragen$ form a directed cycle in $\Gamma_{\Qfragen}$. For each $i$, let
$C_i$ be the cube of $Q$ containing $\Delta_i$. If $C_i=C_{i+1}$, then the
height of $\Delta_{i+1}$ is greater than that of $\Delta_i$. Therefore, a
maximal subsequence $S$ of different cubes among $C_0,C_1,\ldots,C_{k-1}$
consists of more than one element. Moreover, consecutive cubes in $S$ share a
(vertical) facet, whence $S$ determines a directed cycle in $\Gamma_{n,d}$,
contradicting Proposition~\ref{pr:acycl_all}.
  \end{proof}

Thus, the transitive closure of the above relation on the fragments of
$\Qfragen$ forms a partial order, denoted as $\precen$. As a consequence
(cf.~\refeq{w-lattice} and Corollary~\ref{cor:seq_membranes}):
   \begin{numitem1} \label{eq:e-lattice}
For an e-membrane $M$ of $Q$, let $\Qfragen(M)$ denote the collection of
fragments of $\Qfragen$ lying between $\Zfr$ and $M$. Then the set $\Mscre(Q)$
of e-membranes of a cubillage $Q$ on $Z(n,d)$ is a distributive lattice, with
the minimal element $\Zfr$ and the maximal element $\Zrear$, in which for
$M,M'\in\Mscre(Q)$, the e-membranes $M\wedge M'$ and $M\vee M'$ satisfy
$\Qfragen(M\wedge M')=\Qfragen(M)\cap \Qfragen(M')$ and $\Qfragen(M\vee
M')=\Qfragen(M)\cup \Qfragen(M')$.
  \end{numitem1}
  \begin{numitem1} \label{eq:seq_emembr}
Let $M$ be an e-membrane of $Q$. Then there exists a sequence of e-membranes
$M_0,M_1,\ldots,M_k\in \Mscre(Q)$ such that $M_0=\Zfr$, $M_k=M$, and for
$i=1,\ldots,k$, $M_{i-1}$ is obtained from $M_i$ by the lowering flip using
some maximal (w.r.t. $\precen$) fragment $\Delta$ in $\Qfragen(M_i)$ (in the
sense that $\Delta^\epsrear\subset M_i$, and $M_{i-1}$ is obtained from $M_i$ by
replacing the disk $\Delta^\epsrear$ by $\Delta^\epsfr$).
  \end{numitem1}

Based on the above properties, we obtain a geometric result which can be
viewed, to some extent, as a counterpart of Theorem~\ref{tm:membr-wsepar}
(concerning the odd case).

\begin{theorem} \label{tm:emembr}
Let $r$ be even and $d:=r+2$. Suppose that a cubillage $Q$ on $Z=Z(n,d)$
possesses the property that
 \begin{itemize}
 \item[\rm(P)] no e-membrane of $Q$ has a pair of vertices forming a double
 $r$-comb.
 \end{itemize}
Then for any e-membrane $M$ of $Q$,
  \begin{itemize}
\item[\rm(i)] the set $V(M)$ of vertices of $M$ (regarded as subsets of $[n]$)
is weakly $r$-separated;
 \item[\rm(ii)] $|V(M)|=s_{n,r}$.
 \end{itemize}
 \end{theorem}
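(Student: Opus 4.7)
\medskip

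\noindent\emph{Plan of proof.}~The plan is to mimic the strategy used for Theorem~\ref{tm:membr-wsepar}: exploit the distributive-lattice structure of $\Mscre(Q)$ from~\refeq{e-lattice} together with the raising-flip description~\refeq{seq_emembr}, and induct along a chain $\Zfr = M_0, M_1, \ldots, M_k = M$ of e-membranes linked by raising flips. The base case $M_0 = \Zfr$ delivers both conclusions at once: by~\refeq{size_membr} (applied with $d = r+2$), $V(\Zfr)$ is strongly $(d-2)$-separated (hence weakly $r$-separated) and has cardinality $s_{n,d-2} = s_{n,r}$. It then suffices to check that a single raising flip $M \mapsto M'$ using a fragment $\Delta \in \Qfragen$ preserves both weak $r$-separation and the vertex count.

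First I would control how $V$ changes under such a flip. If $\Delta = \Cfrag_h$ is an ordinary fragment (so $h \ne r', r'+1$, where $r' = r/2 + 1$), then, in direct analogy with Cases~1 and~2 of the proof of Theorem~\ref{tm:membr-wsepar}, every vertex of $\Delta$ lies on $\Crim$: for $d$ even, a vertex $XA$ of $C$ with $|A| \ne r'$ automatically satisfies $XA \in \Cfr \cap \Crear$. Consequently $V(\Delta^\epsfr) = V(\Delta^\epsrear)$ and the flip does nothing to $V(M)$. If $\Delta = \Cfrag_{\rm cup}$ is the doubled fragment of $C = (X\,|\,T)$ with $T = p_1 q_1 \cdots p_{r'} q_{r'}$, a direct verification (checking that a middle-level vertex $XA$ with $|A| = r'$ and $A \ne P, Q$ lies in $\Crim$) identifies $t_C = XP$ and $h_C = XQ$ as the only non-rim vertices of $\Cfrag_{\rm cup}$, with $t_C \in \Cfr \setminus \Crear$ and $h_C \in \Crear \setminus \Cfr$. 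Hence the flip replaces $XP$ by $XQ$ and fixes all other vertices, whence $|V(M)|$ is constant along the chain and equal to $s_{n,r}$; this already gives~(ii).

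For~(i) in the doubled-fragment case, I would invoke Theorem~\ref{tm:local_neighb}, assertion~2. All rim vertices of $\Cfrag_{\rm cup}$ belong to $V(M) \cap V(M')$, and among them are the sets $XS$ for every $S \in \Nscrdown(P,Q)$ (these witnesses live at levels $|X| + r' - 1$ and $|X| + r'$ and are all rim). By the inductive hypothesis, every $Y \in V(M) \setminus \{XP\}$ is weakly $r$-separated from each such $XS$; so if $\{Y, XQ\}$ were bad, Theorem~\ref{tm:local_neighb} would force $Y$ and $XQ$ to be exactly $(r+2)$-interlaced and $Y = XP - b$ for some $b \in X$ with $b > p_1$. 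The case of larger interlacing is handled by a brick-refinement argument in the spirit of Lemma~\ref{lm:refinedPQ}, producing some witness $XS$ also bad with $Y$ and contradicting the inductive hypothesis.

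The principal obstacle is to rule out the residual case $Y = XP - b \in V(M)$; this is where property~(P) enters essentially. The strategy I would pursue is to show that the coexistence of $XP - b$ and $XP$ in $V(M)$ forces some e-membrane of $Q$---either $M$ itself or an e-membrane reachable from $M$ by a short sequence of flips within $\Mscre(Q)$---to contain both center vertices $t_{\tilde C}, h_{\tilde C}$ of an auxiliary cube $\tilde C \in Q$, thereby creating a double $r$-comb pair in violation of~(P). To locate $\tilde C$ one tracks the color-$b$ edge between $XP - b$ and $XP$ through the local structure of $\Qfragen$ around the center of $C$. Once this obstruction is settled, the induction closes and both~(i) and~(ii) follow.
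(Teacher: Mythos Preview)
Your overall strategy matches the paper's proof exactly: induct along a chain of e-membranes from $\Zfr$ to $M$ via raising flips; ordinary fragments leave $V(M)$ unchanged; a doubled-fragment flip swaps $t_C=XP$ for $h_C=XQ$; then invoke Theorem~\ref{tm:local_neighb}(2) using the lower neighbors $XS$, $S\in\Nscrdown(P,Q)$, which all sit in $V(M)$ as rim vertices of $\Cfrag_{\rm cup}$.

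The gap is in your treatment of the residual case $Y=XP-b$. You propose hunting for an auxiliary cube $\tilde C\in Q$ whose center vertices $t_{\tilde C},h_{\tilde C}$ both lie in some e-membrane reachable from $M$. This is both unnecessary and not obviously feasible: nothing guarantees that the pair $\{XP-b,\,XP\}$ forces such a cube to exist in the particular cubillage $Q$. More importantly, you are implicitly reading property~(P) as forbidding only pairs of the form $\{t_{\tilde C},h_{\tilde C}\}$, but (P) forbids \emph{any} double $r$-comb among the vertices of an e-membrane. The paper exploits this directly via Remark~3: if $p_i<b<p_{i+1}$, then for $S:=Q-q_i\in\Nscrdown(P,Q)$ the pair $\{Y,\,XS\}=\{XP-b,\,X(Q-q_i)\}$ is itself a double $r$-comb. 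Since both $Y$ and $XS$ already belong to $V(M)$, this contradicts~(P) for $M$ immediately---no auxiliary cube, no further flips. Replacing your last paragraph with this one-line observation closes the argument.
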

 \begin{proof}
We argue as in the proof of Theorem~\ref{tm:membr-wsepar}. For $M\in\Mscre(Q)$,
consider a sequence $\Zfr=M_0,M_1,\ldots,M_k=M$ of e-membranes of $Q$ as
in~\refeq{seq_emembr}. Since $\Zfr$ satisfies (i),(ii), it suffices to prove
the following assertion.
 \begin{numitem1} \label{eq:emembrMMp}
For $M,M'\in\Mscre(Q)$, let $M'$ be obtained from $M$ by the raising flip using
a (double or ordinary) fragment $\Delta$ of $\Qfragen$, and suppose that $M$
satisfies~(i),(ii). Then $M'$ satisfies (i),(ii) as well.
  \end{numitem1}

To show this, assume that $\Delta$ belongs to a cube $C=(X|T)\in Q$. When
$\Delta$ is ordinary, i.e., $\Delta=\Cfrag_h$ with $h\le r'-1$ or $h\ge r'+2$
(where $r'=r/2+1$), then $V(M')=V(M)$, and we are done (cf. explanations in
Cases~1 and~2 of the proof of Theorem~\ref{tm:membr-wsepar}).

So let $\Delta$ be the center $\Cfrag_{\rm cup}$ of $C$. The raising flip using
$\Delta$ replaces in $M$ the side $\Delta^\epsfr$ by $\Delta^\epsrear$. One can
see that the vertex $t_C$ of $C$ is in $\Delta^\epsfr$ by not in
$\Delta^\epsrear$, while $h_C$ is in $\Delta^\epsrear$ by not in
$\Delta^\epsfr$, and that the other vertices of $\Delta$ and $\Delta'$
coincide. Therefore, the flip replaces $t_C=XP$ by $h_C=XQ$, yielding
$V(M')=(V(M)-\{t_C\})\cup\{h_C\}$, where $T=(p_1<q_1<\ldots<p_{r'}<q_{r'})$,
$P=\{p_1,\ldots,p_{r'}\}$ and $Q=\{q_1,\ldots,q_{r'}\}$.

We have $|V(M')|=|V(M)|$; so $M'$ satisfies~(ii). Suppose, for a contradiction,
that~(i) is false for $M'$, i.e., there are two vertices of $M'$ that are not
weakly $r$-separated from each other. Then one of them is $XQ$, and the other,
$Y$ say, belongs to $M$ and differs from $XP$. By~(i) for $M$, the vertex $Y$
is weakly $r$-separated from $XS$ for each neighbor $S\in\Nscrdown(P,Q)$. (Note
that $XS$ lies in $\Delta^\epsfr$ even if $|S|=r'$.) So we can apply
assertion~2 of Theorem~\ref{tm:local_neighb} and conclude that $Y$ is viewed as
in~\refeq{XPaB}. But then, as mentioned in Remark~3, there is $S\in
\Nscrdown(P,Q)$ such that $\{Y,XS\}$ is a double $r$-comb (namely, $Y=XP-b$ and
$S=Q-q_i$, where $p_i<b<p_{i+1}$). Hence $M$ contains a double $r$-comb,
contrary to condition~(P).

Thus, \refeq{emembrMMp} is valid, and the theorem follows.
 \end{proof}

\noindent\textbf{Remark 4.} ~By the construction of an e-membrane $M$ of a cubillage
$Q$, $M$ has no double $r$-comb of the form $\{t_C,h_C\}$ for a
cube $C$ of $Q$. However, a priori it is not clear whether $M$ is free of
double $r$-combs at all. We conjecture that this is so for any e-membrane, i.e.,
property~(P) holds for any cubillage $Q$. Its validity would give a
strengthening of Theorem~\ref{tm:emembr}. We state it as follows:
 \medskip

\noindent \textbf{Conjecture 2} ~ For $r$ even, the vertex set $V(M)$ of any
e-membrane $M$ of an arbitrary cubillage $Q$ on $Z(n,r+2)$ gives a weakly
$r$-separated collection.
 \medskip

\noindent(Note that such a $V(M)$ is automatically of size $s_{n,r}$, by
explanations above.) It is tempting to conjecture a sharper property (which is
just a direct analog of Theorem~\ref{tm:membr-wsepar}), claiming that $V(M)$ is
weakly $r$-separated for any \emph{w-membrane} of a cubillage $Q$ on $Z(n,r+2)$
(where $|V(M)|$ may exceed $s_{n,r}$), but we do not go so far at the moment.

We finish with an analogue of Conjecture~1:
\medskip

\noindent \textbf{Conjecture 3} ~ For $r$ even, the maximal size of a weakly
$r$-separated collection $W\subseteq 2^{[n]}$ without double $r$-combs is equal
to $s_{n,r}$ and such a $W$ with $|W|=s_{n,r}$ is representable, in the sense
that there exists a cubillage $Q$ on $Z(n,r+2)$ and an e-membrane $M$ of $Q$
such that $V(M)=W$.


\begin{thebibliography}{99}

\bibitem{DK} V.I. Danilov and A.V. Karzanov,
On universal quadratic identities for minors of quantum matrices,
\textsl{J.~Algebra}, \textbf{488} (2017) 145--200.
 %
\bibitem{DKK0} V.I.~Danilov, A.V.~Karzanov and G.A.~Koshevoy, On maximal
weakly separated set-systems, \textsl{J.~of Algebraic Combinatorics}
\textbf{32} (2010) 497--531.
 %
\bibitem{DKK1} V.I.~Danilov, A.V.~Karzanov and G.A.~Koshevoy,
Combined tilings and the purity phenomenon on separated set-systems,
\textsl{Selecta Math. New Ser.} \textbf{23} (2017) 1175--1203.
 %
\bibitem{DKK2} V.I.~Danilov, A.V.~Karzanov and G.A.~Koshevoy, On interrelations
between strongly, weakly and chord separated set-systems (a geometric
approach), \textsl{arXiv}:1805.09595 [math.CO], 2018.
 %
\bibitem{DKK3}
V.I.~Danilov, A.V.~Karzanov and G.A.~Koshevoy, Cubillages of cyclic zonotopes,
\textsl{arXiv}:1902.07156[math.CO], 2019, in Russian; to appear in
\textsl{Uspekhi Matematicheskikh Nauk}.
 %
\bibitem{gal} P.~Galashin, Plabic graphs and zonotopal tilings,
\textsl{Proc. London Math. Soc.} \textbf{117} (4) (2018) 661-681.
 %
\bibitem{GP} P.~Galashin and A.~Postnikov, Purity and separation for oriented
matroids, \textsl{ArXiv}:1708.01329[math.CO], 2017.
  %
\bibitem{LZ} B.~Leclerc and A.~Zelevinsky: Quasicommuting families of
quantum Pl\"ucker coordinates, {\textsl Amer. Math. Soc. Trans., Ser.~2} ~{\bf
181} (1998) 85--108.
  %
\bibitem{MS} Yu.~Manin and V.~Schechtman, Arrangements of hyperplanes, higher
braid groups and higher Bruhat orders, in: \textsl{Algebraic Number Theory --
in Honour of K.~Iwasawa}, \textsl{Advances Studies in Pure Math.} \textbf{17},
Academic Press, NY, 1989, pp.~289--308.
 %
\bibitem{zieg} G.~Ziegler, Higher Bruhat orders and cyclic hyperplane
arrangements, \textsl{Topology} \textbf{32} (1993) 259--279.

\end{thebibliography}
\end{document}